\documentclass[11pt]{amsart}
\usepackage[utf8]{inputenc}
\usepackage[english]{babel}
\usepackage{csquotes, xpatch}
\usepackage{amsmath}
\usepackage{amssymb}
\usepackage{amsthm}
\usepackage{hyperref}
\usepackage{mathtools}
\usepackage{xcolor}
\usepackage{enumerate}
\usepackage{tikz}
\usetikzlibrary{arrows.meta}

\usepackage{geometry}[a4paper]
\usepackage[
  backend=biber,
  style=numeric,
  sorting=nyt,
  sortcites=true,
  autopunct=true,
  maxnames=9
]{biblatex}
\theoremstyle{plain}
\newtheorem{theorem}{Theorem}[section]
\newtheorem{lemma}[theorem]{Lemma}
\newtheorem{proposition}[theorem]{Proposition}

\theoremstyle{definition}
\newtheorem{definition}[theorem]{Definition}

\newtheorem{remark}[theorem]{Remark}

\newcommand{\KP}{\mathbb{KP}}
\newcommand{\op}[1]{#1^{\operatorname{op}}}
\newcommand{\CC}{\mathbb{C}}
\newcommand{\RR}{\mathbb{R}}
\newcommand{\NN}{\mathbb{N}}
\newcommand{\ZZ}{\mathbb{Z}}
\newcommand{\ie}{i.e.~}

\newcommand{\Fourier}{\mathcal{F}}
\newcommand{\Ad}{\operatorname{Ad}}
\newcommand{\sgn}{\operatorname{sign}}
\newcommand{\Tr}{\operatorname{Tr}}
\newcommand{\Cay}{\operatorname{Cay}}

\DeclareMathOperator{\rank}{rank}
\DeclareMathOperator{\linearspan}{span}
\DeclareMathOperator{\id}{\operatorname{id}}

\title{The Quantum Cayley Graphs of the Kac-Paljutkin Quantum Group}
\author[Junk]{Luca Junk}
\date{\today}
\address{Universität des Saarlandes, Fachbereich Mathematik, Saarbrücken, Germany}
\email{\href{mailto:junk@math.uni-sb.de}{junk@math.uni-sb.de}}

\begin{document}
\allowdisplaybreaks

\begin{abstract}
  We study the quantum Cayley graphs associated with the Kac-Paljutkin quantum group. We determine which projections give rise to undirected, irreflexive and connected quantum graphs and classify them up to isomorphism. Their spectra and chromatic numbers are computed. For the special case of the projection \(p = I_2\) we also compute the associated quantum Cuntz-Krieger algebra.
\end{abstract}

\maketitle

\section{Introduction}

The notion of quantum (or non-commutative) graphs was introduced in the context of quantum information theory in \cite{DuanSeveriniWinter2011} where they were studied as the confusability graphs of quantum channels. Later Weaver \cite{Weaver2012,Weaver2021} embedded them into his framework of quantum relations using operator systems and Musto, Reutter and Verdon \cite{MustoReutterVerdon2018} found a description using quantum adjacency operators and edge projections. The equivalence of these different viewpoints was shown in \cite{MustoReutterVerdon2018} for the tracial setting and in \cite{Daws2024} and in \cite{Wasilewski2024} in the general case. Since their introduction, quantum graphs have developed into an interesting theory generalizing classical graphs and with links to operator algebras \cite{BrannanEiflerVoigtWeber2022, BrannanHamidiIsmertNelsonWasilewski2023, HamidiIsmertNelson2026}, quantum (automorphism) groups \cite{BrannanCulfVernooij2025, BrannanChirvasituEiflerHarrisPaulsenSuWasilewski2019, Wasilewski2024, Daws2024} and quantum complexity \cite{CulfMehta2025} among others.

In constrast to classical graphs, it is a non-trivial problem to find interesting (\ie connected, regular, etc.) explicit examples of quantum graphs. An important step in providing more examples was taken with the classification of quantum graphs on \(M_2(\CC)\) \cite{Gromada2022, Matsuda2022, KieferSchaefer2026}. This article aims to contribute in a similar style by classifying all (undirected, irreflexive and connected) quantum graphs that appear as the quantum Cayley graphs of the eight-dimensional Kac-Paljutkin quantum group \(\KP\) which was introduced in \cite{KacPaljutkin1966}. The framework of quantum Cayley graphs --- developed by Wasilewski in \cite{Wasilewski2024} --- is particularly suitable since it is easy to produce regular and connected quantum graphs with this. The choice of the Kac-Paljutkin quantum group is also natural, since it is the smallest quantum group (in the sense of Woronowicz \cite{Woronowicz1987, Woronowicz1998}) which is neither commutative nor co-commutative, \ie sufficiently non-classical to produce interesting new quantum graphs but still small enough to make a complete classification feasible.

\vspace*{1em}

Our results include a full classification of the generating projections in \(C(\KP) = \CC^4 \oplus M_2(\CC)\) which give rise to undirected, irreflexive and connected quantum graphs (Proposition~\ref{prop:projection_classification}). The resulting quantum graphs are classified up to isomorphism (Theorem~\ref{thm:isomorphism_classification}) and their spectra (Proposition~\ref{prop:spectrum}) and chromatic numbers (Theorem~\ref{thm:chromatic_numbers}) are computed. We show that the specific generating projection \(p= I_2 = 1_{M_2(\CC)} \in C(\KP)\) yields a quantum graph which is quantum isomorphic to the classical complete bipartite graph \(K_{4,4}\) (Proposition~\ref{prop:qisoK44}) and we determine the associated quantum Cuntz-Krieger algebra (as defined in \cite{BrannanEiflerVoigtWeber2022}) to be isomorphic to \(M_5(\CC) \otimes \mathcal{O}_{16}\)  (Theorem~\ref{thm:qCuntzKrieger}).

\vspace*{1em}

The paper is organized as follows. In Section 2 we give a brief overview over the general theory of quantum graphs, define their chromatic numbers and explain the quantum Cayley graph construction of \cite{Wasilewski2024}. In Section 3 we recall the definition of the Kac-Paljutkin quantum group, characterize the projections which yield undirected, irreflexive and connected quantum Cayley graphs and determine their isomorphism classes. Moreover, we compute the spectrum and the chromatic numbers of the resulting quantum graphs. In Section 4 we focus on the generating projection \(p = I_2\), showing that the this gives a quantum graph quantum isomorphic to the complete bipartite graph \(K_{4,4}\) and compute the corresponding quantum Cuntz-Krieger algebra.

\section{Preliminaries}

\subsection*{Quantum Graphs}\hfill\\
We briefly review the notion of quantum graphs and introduce the concepts that are needed for this article. The general references for this are \cite{Daws2024,Matsuda2022,Matsuda2024,Wasilewski2024}.

Quantum graphs generalize finite graphs in the sense of non-commutative topology. The role of the finite vertex set is played by a finite-dimensional \(C^*\)-algebra \(B\), equipped with a faithful linear functional \(\psi \colon B \to \CC\), which should be thought of as the analogue of the counting measure on the set of vertices.

We denote the GNS Hilbert space associated to \((B,\psi)\) by \(L^2(B,\psi)\), or simply by \(L^2(B)\). Since \(\psi\) is faithful and \(B\) is finite-dimensional, the GNS map is a linear isomorphism. We therefore identify \(B\) and \(L^2(B)\) as linear spaces and suppress the GNS map from the notation. Thus an element \(x \in B\) will also be regarded as a vector in \(L^2(B)\). We use the convention that inner products are anti-linear in the first variable, so that
\[
  \langle x,y\rangle = \psi(x^*y), \qquad x,y \in B.
\]
Let \(m \colon B \otimes B \to B\) denote the multiplication map. Via the above identification, we also regard \(m\) as a linear map
\[
  m \colon L^2(B) \otimes L^2(B) \to L^2(B),
\]
and we write
\[
  m^* \colon L^2(B) \to L^2(B) \otimes L^2(B)
\]
for its Hilbert-space adjoint.

In the literature on quantum graphs one often assumes that \(\psi\) is a \emph{\(\delta\)-form}, that is, a faithful state satisfying
\[
  mm^* = \delta^2 \id
\]
for some \(\delta>0\). We instead follow the normalization used in \cite{Wasilewski2024} and assume throughout that
\[
  mm^* = \id.
\]
A characterization of all faithful functionals with this property can be obtained from \autocite[Proposition~2.3]{Wasilewski2024}.

A graph structure on a finite quantum space \((B,\psi)\) can be encoded in three equivalent ways: by a quantum adjacency operator, by a quantum edge projection, or by a quantum relation. The last of these should be viewed as an operator-bimodule analogue of the space of functions on the edge set of a classical graph. We recall all three descriptions below and refer to the literature for the explicit equivalences between them.

For the rest of this subsection, we fix a finite-dimensional \(C^*\)-algebra \(B\) together with a faithful functional \(\psi \colon B \to \CC\) satisfying \(mm^*=\id\).

\begin{definition}
  A \emph{quantum adjacency matrix} (or \emph{quantum adjacency operator}) on \((B, \psi)\) is a completely positive linear map \(A \colon B \to B\) satisfying
  \begin{equation}
    m (A \otimes A) m^* = A. \label{eq:schur_idempotent}
  \end{equation}
\end{definition}

Let us compare this condition with the classical case. If \(G\) is a finite graph with vertex set \(V\), then the corresponding finite quantum space is \(B=C(V)\), equipped with the counting functional
\[
  \psi(f)=\sum_{v \in V} f(v).
\]
Writing \((\delta_v)_{v \in V}\) for the canonical basis of \(C(V)\), where \(\delta_v\) is the indicator function of the vertex \(v\), one has
\[
  m^*(\delta_v)=\delta_v \otimes \delta_v.
\]
In particular, \(mm^*=\id\). For two linear maps \(S,T \colon C(V) \to C(V)\), we obtain
\begin{align*}
  m(S \otimes T)m^*(\delta_v) &= S(\delta_v) T(\delta_v) = \sum_{u,w \in V} S_{vu} \delta_u \cdot T_{vw} \delta_w\\
  &= \sum_{w \in V} S_{vw} T_{vw} \delta_w = \sum_{w \in V} (S \bullet T)_{vw} \delta_w\\
  &= (S \bullet T)(\delta_v)
\end{align*}
where \(\bullet\) denotes entrywise multiplication (also called Schur-- or Hadamard--product). Thus the formula
\[
  S \bullet T \coloneqq m (S \otimes T) m^*, \qquad (S, T \colon B \to B)
\]
is the non-commutative analogue of the Schur product. Consequently, condition \eqref{eq:schur_idempotent} says precisely that \(A\) is idempotent with respect to this Schur product. In the classical case this is equivalent to saying that the entries of the adjacency matrix are \(0\) or \(1\).

We will use the following observation repeatedly without further comment. Under the Schur-idempotence condition \eqref{eq:schur_idempotent}, complete positivity of \(A\) is equivalent to \(A\) being \(*\)-pre\-ser\-ving, or \emph{real} in the terminology of \cite{Matsuda2022} (see \autocite[Proposition~2.23]{Matsuda2022}).

\begin{definition}
  A \emph{quantum edge projection} on \(B\) is a projection \(p \in B \otimes \op{B}\) where \(\op{B}\) denotes the opposite algebra of \(B\).
\end{definition}

For a classical graph with vertex set \(V\) and edge set \(E \subseteq V \times V\), one has
\[
  C(V) \otimes \op{C(V)} \cong C(V \times V),
\]
and the corresponding edge projection is the indicator function of \(E\).

\begin{definition}
  Let \(B\) be faithfully represented on a Hilbert space \(H\), so we can assume \(B \subseteq B(H)\). A \emph{quantum relation} on \(B\) is a weak-\(*\)-closed \(B'\)-\(B'\)-bimodule \(S \subseteq B(H)\).
\end{definition}

The three descriptions above are equivalent: quantum adjacency operators, quantum edge projections, and quantum relations are in natural one-to-one correspondence. For proofs and explicit descriptions of the bijections, see for example \cite{Daws2024}, \cite{MustoReutterVerdon2018}, and \cite{Wasilewski2024}. This justifies the following definition.

\begin{definition}
  A \emph{quantum graph} \(G\) consists of a finite quantum space \((B, \psi)\), together with any one of the following equivalent pieces of data:
  \begin{itemize}
    \item a quantum adjacency operator \(A \colon B \to B\),
    \item a quantum edge projection \(p_{\text{edge}} \in B \otimes \op{B}\),
    \item a quantum relation on \(B\), relative to a faithful representation \(B \subseteq B(H)\).
  \end{itemize}
  We say that \(G\) is
  \begin{itemize}
    \item \emph{tracial} if \(\psi\) is tracial,
    \item \emph{reflexive}, or \emph{has all loops}, if \(A \bullet \id = \id\),
    \item \emph{irreflexive}, or \emph{has no loops}, if \(A \bullet \id = 0\),
    \item \emph{(GNS-)undirected} if \(A\) is self-adjoint with respect to the GNS inner product,
    \item \emph{\(d\)-regular} if \(A(1) = d \cdot 1 = A^*(1)\) for some \(d > 0\),
    \item \emph{connected} if \(G\) is undirected and there is no non-trivial projection \(p \in B \setminus \{0,1\}\) such that \(A\) commutes with the left-regular representation \(\lambda(p)\) of \(p\) (see \cite{CourtneyGanesanWasilewski2025} for several equivalent characterizations of connectedness).
  \end{itemize}
\end{definition}

In this article we will almost exclusively be concerned with the adjacency operator picture of quantum graphs.

In the non-tracial setting, one may also use the KMS inner product
\[
  \langle x,y\rangle_{\mathrm{KMS}}
  \coloneqq
  \psi\bigl(x^*\sigma_{-i/2}(y)\bigr),
  \qquad x,y \in B,
\]
where \((\sigma_{z})_{z \in \CC}\) denotes the (analytic continuation of the) modular automorphism group associated to \((B,\psi)\). As explained in \cite{Wasilewski2024}, the KMS inner product is better behaved with respect to positivity, and GNS self-adjointness can be too restrictive as a definition of undirectedness. Since the quantum graphs considered in this article are tracial, the GNS and KMS inner products coincide. We therefore do not distinguish between these notions in what follows.

\begin{definition}
  Two quantum graphs \(G = (B, \psi, A)\) and \(G' = (B', \psi', A')\) are \emph{isomorphic} if there is a unital \(*\)-isomorphism \(\theta \colon B \to B'\) such that
  \[
    \psi' \circ \theta = \psi \quad \text{and} \quad \theta A = A' \theta.
  \]
\end{definition}

\begin{definition}
  Let \(G = (B, \psi, A)\) be an irreflexive quantum graph and let \(S \subseteq B(H)\) be the associated operator bimodule. A \emph{\(k\)-coloring} of \(G\) is given by projections \(P_1, \ldots, P_k \in B \otimes M\) for some finite von Neumann algebra \(M\) with a faithful normal trace such that
  \[
    \sum_{i = 1}^k P_i = 1_{B \otimes M}
  \]
  and
  \[
    P_i (X \otimes 1_M) P_i = 0, \qquad 1 \leq i \leq k,
  \]
  for every \(X \in S\). The coloring is called \emph{classical} (or \emph{local}) if \(\dim(M) = 1\), \emph{quantum} if \(\dim(M) < \infty\) and \emph{quantum commuting} if \(M\) is infinite-dimensional. The classical, quantum and quantum commuting \emph{chromatic numbers} of \(G\) are then defined as the smallest \(k \in \NN\) such that there exists a classical/quantum/quantum commuting \(k\)-coloring of \(G\) (or \(\infty\) if no finite coloring exists) and denoted by \(\chi(G)\), \(\chi_q(G)\) and \(\chi_{qc}(G)\) respectively.
\end{definition}

The next lemma characterizes classical colorings in the tracial case purely in terms of the quantum adjacency matrix without reference to the operator bimodule. This is probably known to experts, but we could not find this specific formulation in the literature, so we give a short proof.

\begin{lemma}\label{lem:coloring_equivalence}
  Let \(G = (B, \psi, A)\) be an irreflexive tracial quantum graph. A family of projections \(P_1, \ldots, P_k \in B\) such that \(\sum_{i=1}^k P_i = 1\) is a classical \(k\)-coloring of \(G\) if and only if
  \[
    P_i A(P_i) = 0
  \]
  for all \(1 \leq i \leq k\).
\end{lemma}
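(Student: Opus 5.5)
We need the explicit link between the adjacency operator \(A\) and the quantum relation \(S\). In the tracial case (with \(mm^*=\id\)) we may take \(H = L^2(B)\) with \(B\) acting by left multiplication, so \(B' = \op{B}\) acts by right multiplication. We will use the standard Kraus-type description of the correspondence, e.g. from \cite{Daws2024} or \cite{Wasilewski2024}: \(A\) is completely positive, so we write
\[
  A(x) = \sum_j V_j^* x V_j ,
\]
with \(V_j \in B(H)\) chosen so that the weak-\(*\)-closed \(B'\)-\(B'\)-bimodule generated by the \(V_j\) is exactly \(S\). Since \(\dim(M)=1\) for a classical coloring, the condition to check is \(P_i X P_i = 0\) for all \(X\in S\), with \(P_i\in B\).

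\textbf{Reduction to the generators.} Because \(P_i \in B\) commutes with \(B'\), we have \(P_i (b' V_j c') P_i = b' (P_i V_j P_i) c'\) for \(b',c'\in B'\). Hence \(P_i S P_i = 0\) if and only if \(P_i V_j P_i = 0\) for all \(j\). By positivity this is equivalent to
\[
  \sum_j P_i V_j^* P_i V_j P_i = 0 .
\]
Using the expansion of \(A\), this in turn says \(P_i A(P_i) P_i = 0\).

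\textbf{Passing from \(P_iA(P_i)P_i\) to \(P_iA(P_i)\).} This step needs more care. \(A(P_i)\ge 0\) because \(A\) is completely positive, so \(P_iA(P_i)P_i = 0\) gives \(A(P_i)^{1/2}P_i = 0\). Therefore \(A(P_i)P_i = 0\), and taking adjoints \(P_iA(P_i)=0\). The converse direction is immediate.

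The main obstacle is identifying the Kraus operators of \(A\) with generators of \(S\) on the correct Hilbert space. One has to check that the \(V_j\) live in \(B(L^2(B))\) and that conjugation by \(V_j\) corresponds to \(A\) as a map on \(B\), rather than to some twisted version of it. In the tracial setting no modular twist appears, which is exactly where the traciality hypothesis enters. If the Kraus form is inconvenient, a fallback is to use the edge-projection picture: \(S\) corresponds to a projection \(p_{\text{edge}} \in B\otimes\op{B}\), and \(P_iSP_i=0\) becomes \((P_i\otimes \op{P_i})\,p_{\text{edge}}=0\). Pairing this with the trace \(\psi\otimes\psi\) gives \(\psi(P_iA(P_i))=0\), up to a normalisation constant. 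Positivity of \(P_iA(P_i)P_i\) together with faithfulness of \(\psi\) then yields the same conclusion.
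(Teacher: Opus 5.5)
Your proposal is correct and follows the paper's proof. Both write $A$ in Kraus form with Kraus operators that generate the quantum relation $S$ (the paper takes this from \cite[Proposition~3.30]{Wasilewski2024}), identify $P_iA(P_i)P_i$ as the sum of the positive terms $(P_iV_jP_i)^*(P_iV_jP_i)$, and then use positivity of $A(P_i)$ to pass to $P_iA(P_i)=0$. The only difference is that the paper works in the representation on $\bigoplus_\alpha \CC^{n_\alpha}$ and cites Weaver's representation-independence \cite[Theorem~2.7]{Weaver2012} to justify choosing a convenient $H$; your choice $H = L^2(B)$ tacitly needs the same justification.
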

\begin{proof}
  As a finite-dimensional \(C^*\)-algebra, \(B\) is of the form \(B = \bigoplus_{\alpha = 1}^d M_{n_\alpha}(\CC)\) and the condition \(mm^* = \id\) together with traciality force \(\psi\) to be of the form \(\psi = \bigoplus_{\alpha = 1}^d n_\alpha \Tr_{n_\alpha}\) by \autocite[Proposition~2.3]{Wasilewski2024}. Since the quantum relation associated to \(G\) is essentially independent of the faithful representation of \(B\) by \autocite[Theorem~2.7]{Weaver2012} we may assume that \(B\) is represented faithfully on \(H = \bigoplus_{\alpha=1}^d \CC^{n_\alpha}\) in the natural way. The quantum relation \(S\) then decomposes into blocks \(S_{\alpha \beta} \subseteq B(\CC^{n_\alpha}, \CC^{n_\beta})\) and we can choose an orthonormal basis \((S_{\alpha \beta}^i)_i\) for each block such that
  \begin{equation}\label{eq:Kraus_form_ONB}
    A(x) = \bigoplus_{\beta=1}^d \sum_{\alpha=1}^d \sum_i \sqrt{\frac{n_\alpha}{n_\beta}} S_{\alpha \beta}^i x_\alpha (S_{\alpha \beta}^i)^*, \qquad x \in B,
  \end{equation}
  by \autocite[Proposition~3.30]{Wasilewski2024}.

  Now let \(P = \bigoplus_{\alpha=1}^d P_\alpha \in B\) be any projection. It suffices to show that \(P X P = 0\) for all \(X \in S\) is equivalent to \(P A(P) = 0\). The \(\beta\) block of \(PA(P)P\) is, by (\ref{eq:Kraus_form_ONB}),
  \begin{align*}
    (PA(P)P)_\beta &= P_\beta \Big( \sum_\alpha \sum_i \sqrt{\frac{n_\alpha}{n_\beta}} S_{\alpha \beta}^i P_\alpha (S_{\alpha \beta}^i)^* \Big) P_\beta\\
    &= \sum_\alpha \sum_i \sqrt{\frac{n_\alpha}{n_\beta}} (P_\beta S_{\alpha \beta}^i P_\alpha) (P_\beta S_{\alpha \beta}^i P_\alpha)^*.
  \end{align*}
  Every summand above is positive, so \(PA(P)P = 0\) if and only if \(P_\beta S_{\alpha \beta}^i P_\alpha = 0\) for every \(\alpha, \beta, i\). Since \((S_{\alpha \beta}^i)_i\) spans \(S_{\alpha \beta}\), this is equivalent to \(P X P = 0\) for every \(X \in S\). Moreover, by the (complete) positivity of \(A\), \(P A(P) P = 0\) is equivalent to \(P A(P) = 0\). This proves the claim.
\end{proof}

\subsection*{Finite Quantum Groups}\hfill\\
Finite quantum groups are the non-commutative analogues of finite groups. In the same way that a finite quantum space is described by a finite-dimensional \(C^*\)-algebra, a finite quantum group is described by a finite-dimensional \(C^*\)-algebra equipped with a coproduct encoding the group multiplication. They form a natural subclass of the compact quantum groups defined by Woronowicz in \cite{Woronowicz1998}. For a specific treatment of finite quantum groups we refer to \cite{VanDaele1997}.

\begin{definition}
  A \emph{finite quantum group} is a pair \(G=(A,\Delta)\), where \(A\) is a finite-dimensional \(C^*\)-algebra and \(\Delta \colon A \to A \otimes A\) is a unital \(*\)-homomorphism satisfying the coassociativity condition
  \[
    (\Delta \otimes \id)\Delta = (\id \otimes \Delta)\Delta.
  \]
  Moreover, we require the cancellation conditions
  \[
    \linearspan \{\Delta(a)(1 \otimes b) : a,b \in A\} = A \otimes A = \linearspan \{\Delta(a)(b \otimes 1) : a,b \in A\}.
  \]
\end{definition}

Let \(\Gamma\) be a finite group. Then it becomes a finite quantum group if we take \(A = C(\Gamma)\) and \(\Delta\) to be the dual of the group law, \ie
\begin{align*}
  &\Delta \colon C(\Gamma) \to C(\Gamma) \otimes C(\Gamma) \cong C(\Gamma \times \Gamma)\\
  &\Delta(f)(s,t) = f(st), \qquad f \in C(\Gamma), \ s,t \in \Gamma.
\end{align*}
The coassociativity of \(\Delta\) is precisely the associativity of the multiplication on \(\Gamma\), while the cancellation conditions correspond to the usual cancellation property in the group. Moreover, all finite quantum groups with commutative algebra \(A\) are of this form.

For two functionals \(\eta, \omega \colon C(G) \to \CC\) we can define their convolution product \(\eta * \omega \colon C(G) \to \CC\) as
\[
  \eta * \omega \coloneqq (\eta \otimes \omega) \Delta.
\]
Every finite quantum group \(G\) admits a unique Haar state, \ie a state \(h \colon C(G) \to \CC\) such that
\[
  \omega * h = h * \omega = \omega(1) h
\]
for all \(\omega \in C(G)^*\). The Haar state is in fact faithful (see \cite{VanDaele1997}) and in the case of a classical group \(\Gamma\), it is given by integration against the normalized counting measure.

In order to match the conventions of \cite{Wasilewski2024} we will however rescale the Haar state and work instead with the functional \(H \coloneqq \dim C(G) \cdot h\).

The Fourier transform \(\Fourier \colon C(G) \to C(G)^*\), which is given by
\[
  \Fourier(a)(b) \coloneqq H(ab), \qquad a,b \in C(G),
\]
allows us to also define a convolution product for elements of \(C(G)\) by setting
\[
  a * b \coloneqq \Fourier^{-1}( \Fourier(a) * \Fourier(b) ), \qquad a,b \in C(G).
\]

Every finite quantum group carries a Hopf algebra structure, \ie there exists a counit \(\varepsilon \colon C(G) \to \CC\) and an antipode \(S \colon C(G) \to C(G)\) which --- together with the comultiplication \(\Delta\) --- satisfy the usual Hopf algebra axioms. See \autocite[Theorem~5.4.1]{Timmermann2008} for a proof.

\subsection*{Quantum Cayley Graphs}\hfill\\
In \cite{Wasilewski2024} the usual Cayley graph construction was generalized to discrete quantum groups and quantum graphs. We will describe it briefly in the --- far simpler --- case of finite quantum groups where the left and right Haar weights coincide and are in fact bounded linear functionals.

Let \(G\) be a finite quantum group and denote the multiplication map of \(C(G)\) by \(m\). From the discussion in \autocite[Section~4.1]{Wasilewski2024} we have that \(mm^* = \id\) if we equip \(C(G)\) with the Haar functional \(H\). For an element \(x \in C(G)\) we can consider its associated left convolution operator \(T_x \colon C(G) \to C(G)\), \(T_x(a) \coloneqq x * a\). Then we have \(T_x \bullet T_y = T_{xy}\) for \(x, y \in C(G)\) (\autocite[Proposition~4.8]{Wasilewski2024}), so \(T_x\) is Schur idempotent if and only if \(x\) is an idempotent and it is \(*\)-preserving if and only if \(x\) is self-adjoint (\autocite[Proposition~4.9]{Wasilewski2024}). Moreover, if \(x = x^*\), then \(T_x\) is GNS self-adjoint if and only if \(x = S(x)\) and KMS self-adjoint if and only if \(x = R(x)\) where \(R\) is the unitary antipode of \(G\) (\autocite[Theorem~4.10]{Wasilewski2024}).

In the classical case, a Cayley graph is built from a generating set which is closed under inverses and does not contain the unit. In the quantum case we have a projection \(p \in C(G)\) instead of a subset and these condition naturally translate to \(S(p) = p\) and \(\varepsilon(p) = 0\). Furthermore, we say that \(p\) is \emph{generating} if
\[
  \bigvee_{n \in \NN} [p^{*n}] = 1
\]
where \([x]\) denotes the range projection of an operator.

\begin{definition}
  Let \(G\) be a finite quantum group and \(p \in C(G)\) a generating projection satisfying \(S(p) = p\) and \(\varepsilon(p) = 0\). Then we define the \emph{quantum Cayley graph} of \(G\) with respect to \(p\) via the quantum adjacency operator
  \begin{align*}
    A \colon C(G) &\to C(G)\\
    x &\mapsto p * x.
  \end{align*}
\end{definition}

\begin{remark}
  Of course we can also consider projections \(p \in C(G)\) which do not fulfill the assumptions of the above definition. The corresponding convolution operators will still be quantum adjacency operators, but the resulting quantum graphs will generally not be undirected or connected and might have loops.
\end{remark}

\section{The Kac-Paljutkin Quantum Group}
The Kac-Paljutkin quantum group \(\KP\), introduced by Kac and Paljutkin in \cite{KacPaljutkin1966} (see \cite{Baraquin2019} and \autocite[Appendix~B]{FranzGohm2006} for a modern treatment), is the smallest quantum group which does not come from a group in the sense that its algebra of functions is neither commutative nor cocommutative. It is given by the \(8\)-dimensional \(*\)-algebra
\[
  C(\KP) = \CC \oplus \CC \oplus \CC \oplus \CC \oplus M_2(\CC)
\]
together with a comultiplication \(\Delta_\KP : C(\KP) \to C(\KP) \otimes C(\KP)\) defined as follows. If we denote the canonical basis of \(C(\KP)\) by
\begingroup
\allowdisplaybreaks
\begin{align*}
  e_1 =& 1 \oplus 0 \oplus 0 \oplus 0 \oplus
  \begin{pmatrix} 0 & 0 \\ 0 & 0
  \end{pmatrix}, &&a_{11} = 0 \oplus 0 \oplus 0 \oplus 0 \oplus
  \begin{pmatrix} 1 & 0 \\ 0 & 0
  \end{pmatrix}, \\
  e_2 =& 0 \oplus 1 \oplus 0 \oplus 0 \oplus
  \begin{pmatrix} 0 & 0 \\ 0 & 0
  \end{pmatrix}, &&a_{12} = 0 \oplus 0 \oplus 0 \oplus 0 \oplus
  \begin{pmatrix} 0 & 1 \\ 0 & 0
  \end{pmatrix}, \\
  e_3 =& 0 \oplus 0 \oplus 1 \oplus 0 \oplus
  \begin{pmatrix} 0 & 0 \\ 0 & 0
  \end{pmatrix}, &&a_{21} = 0 \oplus 0 \oplus 0 \oplus 0 \oplus
  \begin{pmatrix} 0 & 0 \\ 1 & 0
  \end{pmatrix}, \\
  e_4 =& 0 \oplus 0 \oplus 0 \oplus 1 \oplus
  \begin{pmatrix} 0 & 0 \\ 0 & 0
  \end{pmatrix}, &&a_{22} = 0 \oplus 0 \oplus 0 \oplus 0 \oplus
  \begin{pmatrix} 0 & 0 \\ 0 & 1
  \end{pmatrix},
\end{align*}
then \(\Delta_\KP\) is given by
\begin{align*}
  \Delta_\KP(e_1) &= e_1 \otimes e_1 + e_2 \otimes e_2 + e_3 \otimes e_3 + e_4 \otimes e_4 + \frac{1}{2} \sum_{1 \leq i,j \leq 2} a_{ij} \otimes a_{ij}, \\
  \Delta_\KP(e_2) &= e_1 \otimes e_2 + e_2 \otimes e_1 + e_3 \otimes e_4 + e_4 \otimes e_3 \\& + \frac{1}{2} (a_{11} \otimes a_{22} + a_{22} \otimes a_{11} + i a_{21} \otimes a_{12} - i a_{12} \otimes a_{21})\\
  \Delta_\KP(e_3) &= e_1 \otimes e_3 + e_3 \otimes e_1 + e_2 \otimes e_4 + e_4 \otimes e_2 \\& + \frac{1}{2} (a_{11} \otimes a_{22} + a_{22} \otimes a_{11} - i a_{21} \otimes a_{12} + i a_{12} \otimes a_{21})\\
  \Delta_\KP(e_4) &= e_1 \otimes e_4 + e_4 \otimes e_1 + e_2 \otimes e_3 + e_3 \otimes e_2 \\& + \frac{1}{2} (a_{11} \otimes a_{11} + a_{22} \otimes a_{22} - a_{12} \otimes a_{12} - a_{21} \otimes a_{21})\\
  \Delta_\KP(a_{11}) &= e_1 \otimes a_{11} + a_{11} \otimes e_1 + e_2 \otimes a_{22} + a_{22} \otimes e_2 \\& + e_3 \otimes a_{22} + a_{22} \otimes e_3 + e_4 \otimes a_{11} + a_{11} \otimes e_4 \\
  \Delta_\KP(a_{12}) &= e_1 \otimes a_{12} + a_{12} \otimes e_1 + i e_2 \otimes a_{21} - i a_{21} \otimes e_2 \\& - i e_3 \otimes a_{21} + i a_{21} \otimes e_3 - e_4 \otimes a_{12} - a_{12} \otimes e_4 \\
  \Delta_\KP(a_{21}) &= e_1 \otimes a_{21} + a_{21} \otimes e_1 - i e_2 \otimes a_{12} + i a_{12} \otimes e_2 \\& + i e_3 \otimes a_{12} - i a_{12} \otimes e_3 - e_4 \otimes a_{21} - a_{21} \otimes e_4 \\
  \Delta_\KP(a_{22}) &= e_1 \otimes a_{22} + a_{22} \otimes e_1 + e_2 \otimes a_{11} + a_{11} \otimes e_2 \\& + e_3 \otimes a_{11} + a_{11} \otimes e_3 + e_4 \otimes a_{22} + a_{22} \otimes e_4.
\end{align*}
\endgroup
The counit \(\eta_\KP : C(\KP) \to \CC\) is given by
\[
  \eta_\KP\Big(x_1 \oplus x_2 \oplus x_3 \oplus x_4 \oplus
    \begin{pmatrix} a_{11} & a_{12} \\ a_{21} & a_{22}
  \end{pmatrix}\Big) = x_1
\]
and the antipode \(S_\KP : C(\KP) \to C(\KP)\) is simply the transposition map, i.e. \(S_\KP(e_i) = e_i\) for \(i = 1, 2, 3, 4\) and \(S_\KP(a_{ij}) = a_{ji}\) for \(i,j = 1,2\). The Haar state \(h_\KP : C(\KP) \to \CC\) is given by
\[
  h_\KP\Big(x_1 \oplus x_2 \oplus x_3 \oplus x_4 \oplus
    \begin{pmatrix} a_{11} & a_{12} \\ a_{21} & a_{22}
  \end{pmatrix}\Big) = \frac{1}{8} (x_1 + x_2 + x_3 + x_4 + 2(a_{11} + a_{22})).
\]

Since our goal is to study quantum Cayley graphs associated to \(\KP\), we need to compute the convolutions of elements of \(C(\KP)\). In order to make the following computations less cumbersome and easier to follow, we give here a full table of all possible convolutions of basis elements of \(C(\KP)\).

\begin{table}[h]
  \centering
  \begin{tabular}{|c|c|c|c|c|c|c|c|c|}
    \hline
    \(x * y\) & \(e_1\) & \(e_2\) & \(e_3\) & \(e_4\) & \(a_{11}\) & \(a_{12}\) & \(a_{21}\) & \(a_{22}\) \\ \hline
    \(e_1\) & \(e_1\) & \(e_2\) & \(e_3\) & \(e_4\) & \(a_{11}\) & \(a_{12}\) & \(a_{21}\) & \(a_{22}\) \\ \hline
    \(e_2\) & \(e_2\) & \(e_1\) & \(e_4\) & \(e_3\) & \(a_{22}\) & \(i a_{21}\) & \(-i a_{12}\) & \(a_{11}\) \\ \hline
    \(e_3\) & \(e_3\) & \(e_4\) & \(e_1\) & \(e_2\) & \(a_{22}\) & \(-i a_{21}\) & \(i a_{12}\) & \(a_{11}\) \\ \hline
    \(e_4\) & \(e_4\) & \(e_3\) & \(e_2\) & \(e_1\) & \(a_{11}\) & \(-a_{12}\) & \(-a_{21}\) & \(a_{22}\) \\ \hline
    \(a_{11}\) & \(a_{11}\) & \(a_{22}\) & \(a_{22}\) & \(a_{11}\) & \(2(e_1 + e_4)\) & \(0\) & \(0\) & \(2(e_2 + e_3)\) \\ \hline
    \(a_{12}\) & \(a_{12}\) & \(-i a_{21}\) & \(i a_{21}\) & \(-a_{12}\) & \(0\) & \(2(e_1 - e_4)\) & \(2i(e_2 - e_3)\) & \(0\) \\ \hline
    \(a_{21}\) & \(a_{21}\) & \(i a_{12}\) & \(-i a_{12}\) & \(-a_{21}\) & \(0\) & \(-2i(e_2 - e_3)\) & \(2(e_1 - e_4)\) & \(0\) \\ \hline
    \(a_{22}\) & \(a_{22}\) & \(a_{11}\) & \(a_{11}\) & \(a_{22}\) & \(2(e_2 + e_3)\) & \(0\) & \(0\) & \(2(e_1 + e_4)\) \\ \hline
  \end{tabular}
\end{table}

We shall only be interested in irreflexive undirected and connected quantum graphs. According to \cite{Wasilewski2024}, a quantum Cayley graph has these properties if and only if the generating projection \(p\) satisfies \(\eta(p) = 0\) (irreflexive), \(R(p) = p\) (undirected) and \(\bigvee_{n \in \mathbb N} [p^{*n}] = 1\) (connected) where \(\eta\) is the counit, \(R\) is the unitary antipode and \([T]\) denotes the range projection of an operator \(T\). In the case of \(\KP\), we can give a classification of all such projections as follows (note that the unitary antipode equals the antipode in this case since \(\KP\) is of Kac type).

\begin{proposition}\label{prop:projection_classification}
  Let \(p \in C(\KP)\) be a projection. Then the associated quantum Cayley graph is undirected, irreflexive and connected if and only if \(p\) is of the form
  \begin{enumerate}
    \item \(p = x e_2 + y e_3 + z e_4 +
        \begin{pmatrix}
          1 & 0 \\ 0 & 1
      \end{pmatrix}\)
      where \(x, y, z \in \{0,1\}\),
    \item \(p = x e_2 + y e_3 + z e_4 + \frac{1}{1 + c^2}
        \begin{pmatrix}
          1 & c \\ c & c^2
      \end{pmatrix}\)
      where \(x, y, z \in \{0,1\}\) and \(c \in \mathbb{R}\setminus\{0\}\),
    \item \(p = x e_2 + y e_3 + z e_4 + r\)
      where \(x, y, z \in \{0,1\}\), \(r \in \left\{
          \begin{pmatrix}
            1 & 0 \\ 0 & 0
          \end{pmatrix},
          \begin{pmatrix}
            0 & 0 \\ 0 & 1
          \end{pmatrix}
      \right\}\) and \(x + y \geq 1\).
  \end{enumerate}
\end{proposition}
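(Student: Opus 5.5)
We will show that a projection \(p\) satisfies \(\eta(p)=0\), \(S(p)=p\) and \(\bigvee_n[p^{*n}]=1\) exactly when it has one of the forms (1)–(3). These are the criteria of \cite{Wasilewski2024} quoted above; since \(\KP\) is of Kac type, \(R=S\). Write a general projection as \(p = x_1e_1 + xe_2+ye_3+ze_4 + q\) with \(x_1,x,y,z\in\{0,1\}\) and \(q\) a projection in \(M_2(\CC)\).

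\textbf{Irreflexivity and undirectedness.} Irreflexivity \(\eta(p)=0\) means exactly \(x_1=0\). Since \(S\) is the identity on the commutative part and transposition on \(M_2(\CC)\), undirectedness is equivalent to \(q^T=q\). As \(q\) is self-adjoint, this says \(\bar q = q\), i.e. \(q\) is a real symmetric projection. So \(q\) is \(0\), \(I_2\), or a rank-one projection \(vv^T\) with \(v\in\RR^2\) a unit vector. Up to sign, \(v=(1,c)/\sqrt{1+c^2}\) with \(c\in\RR\), or \(v=(0,1)\). This gives exactly the three matrix shapes in (1)–(3), with \(c=0\) and \(v=(0,1)\) forming case (3).

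\textbf{Connectedness.} First we avoid cancellation. For \(a,b\ge0\), the element \(a*b = T_a(b)\) is positive, because \(T_a\) is completely positive for self-adjoint idempotent \(a\) by the results quoted earlier; more generally, positivity of \(T_a\) for \(a\geq 0\) holds in the Kac setting. Expanding \(p^{*n}\) multilinearly in the positive summands \(e_j\) and \(q\) then shows that \([p^{*n}]\) is the join of the range projections of the individual convolution words. Connectedness therefore reduces to tracking which summands and which parts of \(M_2(\CC)\) are reached, using the convolution table.

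\textbf{Case analysis.}
\begin{itemize}
\item If \(q=0\), every convolution stays in \(\CC^4\), so \(p\) is not connected.
\item If \(q=I_2 = a_{11}+a_{22}\), then \(q*q = 4(e_1+e_2+e_3+e_4)\) and \(q*e_1 = q\). So the join is \(1\) for all \(x,y,z\).
\item If \(q=(a_{11}+ca_{12}+ca_{21}+c^2a_{22})/(1+c^2)\) with \(c\ne0\), the table gives \(q*q\) with coefficients \(2(1+c^2)\) at \(e_1\), \(4c^2/(1+c^2)\) at \(e_2\) and \(e_3\), and \(2(1-c^2)^2/(1+c^2)\) at \(e_4\). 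Thus \(e_1,e_2,e_3\) are reached, and with them \(q*e_2\). From the table this is the real-rank-one-type projection
\[
(c^2a_{11} + ic\,a_{12} - ic\,a_{21} + a_{22})/(1+c^2),
\]
which is different from \(q\), so \(q\vee (q*e_2)=I_2\). Then \(q*q*q\) type words give \(e_4\) as well, for instance via \(I_2\)-support convolved with \(I_2\). So these \(p\) are connected for all \(x,y,z\).
\item If \(q=a_{11}\) (and symmetrically \(q=a_{22}\)), the subalgebra spanned by \(e_1,e_4,a_{11}\) is closed under convolution by the table. Hence \(x=y=0\) is not connected. If \(x=1\) or \(y=1\), then \(e_2*a_{11}=a_{22}\) or \(e_3*a_{11}=a_{22}\), and \(a_{11}*a_{22}=2(e_2+e_3)\) supplies \(e_2,e_3\). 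Together with \(a_{11}*a_{11}=2(e_1+e_4)\) this gives everything.
\end{itemize}

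\textbf{Main obstacle.} The delicate step is the range-projection bookkeeping in the \(c\neq0\) case. We must justify the no-cancellation claim via positivity, and check that the two rank-one projections obtained really span \(M_2(\CC)\) as a join. If positivity of \(T_a\) for merely positive \(a\) is not directly citable, we would instead use that \(p^{*n}\) is a positive combination of convolutions of the projections \(e_j\) and \(q\), each of which is \(T_{\text{proj}}(\text{proj})\) and hence positive.
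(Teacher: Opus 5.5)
Your proof is correct and takes essentially the same approach as the paper: the same reduction to \(w=0\) and real symmetric \(r\), the same case split on \(\rank r\), and the same positivity device (your word expansion plays the role of the paper's observation that \(T_p^n-T_r^n\) is completely positive). The only real variation is in type (2), where you get full support in \(M_2(\CC)\) from the two distinct rank-one projections \(q\) and \(q*e_2\) rather than from the determinant of \(R^{*3}\), which works fine.

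There are two small slips to fix. First, \(q*q = 2e_1 + \frac{4c^2}{(1+c^2)^2}(e_2+e_3) + \frac{2(1-c^2)^2}{(1+c^2)^2}e_4\); your coefficients are off by a factor \(1+c^2\), though this does not affect any supports. Second, words \(q*q*q\) lie in \(M_2(\CC)\) and contain no \(e_4\) component, so the \(e_4\) step should read \(p^{*4}\geq q*p^{*3}\geq \epsilon\, q*I_2 = 2\epsilon(e_1+e_2+e_3+e_4)\), where \(p^{*3}\geq \epsilon I_2\) comes from your previous step. This is exactly what handles the case \(c=\pm1\).
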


\begin{definition}
  We call a projection \emph{admissible} if it is of the form described in the previous proposition and say it is of \emph{type (1), (2)} or \emph{(3)} if it is of the corresponding form.
\end{definition}

\begin{proof}[Proof of Proposition~\ref{prop:projection_classification}]
  Being a projection in \(C(\KP)\), \(p\) is of the form
  \[
    p = w e_1 + x e_2 + y e_3 + z e_4 + r
  \]
  for \(w, x, y, z \in \{0,1\}\) and some projection \(r \in M_2(\CC)\). The irreflexivity condition \(\eta_{\KP}(p) = 0\) forces \(w = 0\) and the undirectedness condition \(S_{\KP}(p) = p\) implies that \(r\) must be real symmetric. For the connectedness condition, we distinguish the three cases \(\rank r = 0\), \(\rank r = 1\), and \(\rank r = 2\).

  For the case \(\rank r = 0\), note that \(\linearspan \{e_1, e_2, e_3, e_4\}\) is closed under convolution, so in this case \(p\) can never be generating.

  In the rank one case, \(r\) is either one of \(a_{11}\), \(a_{22}\), or of the form \(r = \frac{1}{1 + c^2}
    \begin{pmatrix} 1 & c \\ c & c^2
  \end{pmatrix}\) for some \(c \in \mathbb{R}\setminus\{0\}\). We assume now that \(r = a_{11}\). It is easy to see from the convolution table that \(\linearspan \{e_1, e_4, a_{11}\}\) is closed under convolution, so if \(x = y = 0\), \(p\) is not generating. Under the condition \(x + y \geq 1\) we have
  \begin{align*}
    p^{*2} =& \ x e_1 + x y e_4 + x z e_3 + x a_{22} \\
    &+ y x e_4 + y e_1 + y z e_2 + y a_{22} \\
    &+ z x e_3 + z y e_2 + z e_1 + z a_{11} \\
    &+ (x + y) a_{22} + z a_{11} \\
    &+ 2(e_1 + e_4) \\
    \geq& \ e_1 + e_4 + a_{22}
  \end{align*}
  and since left- (as well as right-)convolution with a projection is a (completely) positive map by \autocite[Proposition~4.9]{Wasilewski2024} we have
  \begin{align*}
    p^{*3} &\geq p * (e_1 + e_4 + a_{22}) \\
    &= p + x e_3 + y e_2 + z e_1 + a_{11} + (x + y) a_{11} + z a_{22} + 2(e_2 + e_3)\\
    &\geq e_2 + e_3.
  \end{align*}
  Hence \([p^{*1}] \vee [p^{*2}] \vee [p^{*3}] \geq a_{11} \vee (e_1 + e_4 + a_{22}) \vee (e_2 + e_3) = 1\).  Since the convolution table is symmetric under the swap \(a_{11} \leftrightarrow a_{22}\), the same holds in the case of \(r = a_{22}\).

  Suppose now that
  \[
    r = \frac{1}{1+c^2}
    \begin{pmatrix}
      1 & c \\ c & c^2
    \end{pmatrix}, \qquad c \neq 0,
  \]
  and put \(R \coloneqq (1+c^2) r =
    \begin{pmatrix}
      1 & c \\ c & c^2
  \end{pmatrix}\). The convolution table gives
  \begin{align*}
    R^{*2} =& \ 2(e_1 + e_4) + 2c^2(e_2 + e_3) + 2c^2(e_1 - e_4 + i(e_2 - e_3)) \\
    &+ 2c^2(e_1 - e_4 - i(e_2 - e_3)) + 2c^2(e_2 + e_3) + 2c^4(e_1 + e_4) \\
    =& \ 2\big((c^4 + 2c^2 + 1)e_1 + 2c^2 e_2 + 2c^2 e_3 + (c^4 - 2c^2 + 1)e_4\big) \\
    =& \ 2\big((c^2 + 1)^2 e_1 + 2c^2 e_2 + 2c^2 e_3 + (c^2 - 1)^2 e_4\big)
  \end{align*}
  and
  \begin{align*}
    R^{*3} =& \ 2\big( (c^2+1)^2 a_{11} + (c^2+1)^2 c a_{12} + (c^2+1)^2 c a_{21} + (c^2+1)^2 c^2 a_{22} \\
      &+ 2c^2 a_{22} + 2c^3 i a_{21} - 2c^3 i a_{12} + 2c^4 a_{11} \\
      &+ 2c^2 a_{22} - 2c^3 i a_{21} + 2c^3 i a_{12} + 2c^4 a_{11} \\
    &+ (c^2-1)^2 a_{11} - (c^2-1)^2 c a_{12} - (c^2-1)^2 c a_{21} + (c^2-1)^2 c^2 a_{22} \big) \\
    =& \ 2 \big( ((c^2+1)^2 + 4c^4 + (c^2-1)^2) a_{11} + ((c^2+1)^2 c - (c^2-1)^2 c) a_{12} \\
    &+ ((c^2+1)^2 c - (c^2-1)^2 c) a_{21} + ((c^2+1)^2 c^2 + 4c^2 + (c^2-1)^2 c^2) a_{22} \big) \\
    =& \ 4
    \begin{pmatrix}
      3c^4 + 1 & 2c^3 \\ 2c^3 & c^6 + 3c^2
    \end{pmatrix}.
  \end{align*}
  The determinant of the latter matrix is \(3c^2(c^4+1)^2 > 0\), so \([R^{*3}] = I_2\). If \(c \neq \pm 1\), then \([R^{*2}] = e_1 + e_2 + e_3 + e_4\), and hence \([R^{*2}] \vee [R^{*3}] = 1\). If \(c = \pm 1\), then
  \[
    R^{*3} = 4
    \begin{pmatrix}
      4 & \pm 2 \\ \pm 2 & 4
    \end{pmatrix}
  \]
  and a further convolution gives
  \begin{align*}
    R^{*4} =& \ 8\big( 4(e_1 + e_4) + 4(e_2 + e_3) + 2(e_1 - e_4) + 2i(e_2 - e_3) \\
    &+ 2(e_1 - e_4) - 2i(e_2 - e_3) + 4(e_1 + e_4) + 4(e_2 + e_3) \big) \\
    =& 96 e_1 + 64 e_2 + 64 e_3 + 32 e_4
  \end{align*}
  and \([R^{*3}] \vee [R^{*4}] = 1\). Since multiplication with a non-zero scalar does not change the support projection, we thus have that \(r\) is generating for all \(c \neq 0\). Write \(p = s + r\) where \(s = x e_2 + y e_3 + z e_4\) and denote the corresponding left-convolution operators by \(T_p, T_s, T_r\) respectively. Expanding \(T_p^n - T_r^n = (T_s + T_r)^n - T_r^n\) as a sum of compositions of completely positive maps shows that \(T_p^n - T_r^n\) is completely positive and hence
  \[
    p^{*n} = T_p^n(e_1) \geq T_r^n(e_1) = r^{*n},
  \]
  so \(p\) is generating as well.

  Finally, if \(\rank r = 2\), then \(r =
    \begin{pmatrix} 1 & 0 \\ 0 & 1
  \end{pmatrix}\) and
  \begin{align*}
    p^{*2} =& \ x e_1 + x y e_4 + x z e_3 + x
    \begin{pmatrix} 1 & 0 \\ 0 & 1
    \end{pmatrix}\\
    &+ y x e_4 + y e_1 + y z e_2 + y
    \begin{pmatrix} 1 & 0 \\ 0 & 1
    \end{pmatrix}\\
    &+ z x e_3 + z y e_2 + z e_1 + z
    \begin{pmatrix} 1 & 0 \\ 0 & 1
    \end{pmatrix}\\
    &+ (x + y + z)
    \begin{pmatrix} 1 & 0 \\ 0 & 1
    \end{pmatrix}\\
    &+ 4(e_1 + e_2 + e_3 + e_4)
  \end{align*}
  so \([p^{*1}] \vee [p^{*2}] = 1\).
\end{proof}

\begin{proposition}\label{prop:spectrum}
  Let
  \[
    p = x e_2 + y e_3 + z e_4 +
    \begin{pmatrix}
      a & b \\ b & d
    \end{pmatrix}
  \]
  be an admissible projection, where \(x,y,z \in \{0,1\}\) and \(a,b,d \in \RR\). Put
  \begin{align*}
    &\lambda_0 = x + y + z + 2(a+d), &&\lambda_1 = -x - y + z + 2(a-d)\\
    &\lambda_2 = -x - y + z - 2(a-d), &&\lambda_3 = x + y + z - 2(a+d),
  \end{align*}
  and
  \[
    M_p =
    \begin{pmatrix}
      x-y-z & 2 \sqrt{2}b\\
      2\sqrt{2}b & -x+y-z
    \end{pmatrix}
  \]
  and let \(\mu_+, \mu_- \in \RR\) be the two eigenvalues of \(M_p\). Then the spectrum of the quantum adjacency operator \(A_p\) associated to \(p\) is given by
  \[
    \{\lambda_0, \lambda_1, \lambda_2, \lambda_3, \mu_+, \mu_-\}
  \]
  where the \(\lambda_i\)'s each have multiplicity one and \(\mu_-, \mu_+\) each have multiplicity two. More explicitly:
  \begin{enumerate}[(1)]
    \item For \(p\) of type (1), the spectrum of \(A_p\) is
      \[
        \{x+y+z+4, x+y+z-4, -x-y+z, x-y-z, -x+y-z\}.
      \]
    \item For \(p\) of type (2), the spectrum of \(A_p\) is
      \[
        \{x+y+z+2, x+y+z-2, -x-y+z+2\rho_c, -x-y+z-2\rho_c, \alpha_{c, +}, \alpha_{c, -}\}
      \]
      where
      \[
        \rho_c = \frac{1-c^2}{1+c^2}, \qquad \alpha_{c, \pm} = -z \pm \sqrt{(x-y)^2 + \frac{8c^2}{(1+c^2)^2}}
      \]
      and \(c \in \RR\) such that
      \[
        \begin{pmatrix}
          a & b \\ b & d
        \end{pmatrix}
        = \frac{1}{1+c^2}
        \begin{pmatrix}
          1 & c \\ c & c^2
        \end{pmatrix}.
      \]
    \item For \(p\) of type (3), the spectrum of \(A_p\) is
      \[
        \{x+y+z+2, x+y+z-2, -x-y+z+2, -x-y+z-2, x-y-z, -x+y-z\}.
      \]
  \end{enumerate}
\end{proposition}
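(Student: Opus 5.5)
The plan is to compute the $8\times 8$ matrix of $A_p = T_p$ directly from the convolution table and block-diagonalise it with a good choice of basis. By linearity,
\[
  A_p(v) = x\, e_2 * v + y\, e_3 * v + z\, e_4 * v + a\, a_{11} * v + b\,(a_{12}+a_{21}) * v + d\, a_{22} * v .
\]
Since $p$ is admissible, $A_p$ is GNS self-adjoint. This holds because $S_\KP(p)=p$ and Theorem~4.10 of \cite{Wasilewski2024} applies. Consequently $A_p$ is diagonalisable with real spectrum, and it suffices to find an invariant decomposition and diagonalise each block.

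\textbf{Step 1: the invariant decomposition.} Reading off the table, left convolution by $e_2,e_3,e_4$ acts on $\{e_1,\dots,e_4\}$ as the regular representation of the Klein four-group. It preserves $\linearspan\{e_1+e_4,\, e_2+e_3,\, a_{11},\, a_{22}\}$ and also $\linearspan\{e_1-e_4,\, e_2-e_3,\, a_{12},\, a_{21}\}$. The diagonal entries $a,d$ of $p$ map $e_1\pm e_4$ and $e_2\pm e_3$ to $a_{11},a_{22}$, and map $a_{11},a_{22}$ back to $e_1+e_4$ and $e_2+e_3$. The off-diagonal entry $b$ maps the $a_{12},a_{21}$ part to $e_1-e_4$ and $e_2-e_3$. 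I will verify these inclusions entry by entry. This gives
\[
  C(\KP) = V \oplus W, \qquad V=\linearspan\{e_1+e_4,e_2+e_3,a_{11},a_{22}\},\quad W=\linearspan\{e_1-e_4,e_2-e_3,a_{12},a_{21}\}.
\]
The $b$-contributions must cancel on $V$. For example, $(a_{12}+a_{21})*(e_1+e_4)=0$ and $(a_{12}+a_{21})*a_{11}=0$ by the table. If these cancellations fail, I will instead use the finer combinations $e_1\pm e_4\pm(\cdots)$.

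\textbf{Step 2: the block $V$.} Here $p$ effectively acts through $x,y,z,a,d$ only. In the basis $e_1+e_2+e_3+e_4$, $e_1-e_2-e_3+e_4$, $a_{11}+a_{22}$, $a_{11}-a_{22}$, the Klein part acts diagonally. The $a,d$ part couples $\sum e_i$ with $a_{11}+a_{22}$ and $e_1+e_4-e_2-e_3$ with $a_{11}-a_{22}$. Diagonalising these two $2\times 2$ blocks gives the four simple eigenvalues $\lambda_0,\dots,\lambda_3$. As a sanity check, $\lambda_0 = x+y+z+2(a+d)$ must be the degree $A_p(1)$, which is $p*1 = 8h(p)\cdot 1$. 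This is consistent.

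\textbf{Step 3: the block $W$.} Here the Klein part acts on $e_1-e_4$ and $e_2-e_3$ by signs. It also acts on $a_{12},a_{21}$, using $e_2*a_{12}=ia_{21}$ and related entries, and the $b$-terms couple the two pairs. The natural basis is $e_1-e_4\pm i(e_2-e_3)$ together with $a_{12}\pm i a_{21}$, suitably normalised. I expect the resulting $4\times 4$ matrix to split into two copies of a $2\times 2$ matrix unitarily equivalent to $M_p$, with off-diagonal entries $2\sqrt2 b$ once one normalises by the GNS norms. These norms are $\|e_k\|^2=1$ and $\|a_{ij}\|^2=2$, which is the origin of the $\sqrt2$. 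This would give $\mu_\pm$ with multiplicity two each.

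Getting the normalisations and the $\pm i$ signs right, so that the two copies are genuinely equal rather than merely similar up to a sign, is the main obstacle. If necessary I will simply compute the characteristic polynomial of the $4\times 4$ block and check that it equals $\det(M_p-\mu)^2$.

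\textbf{Step 4: specialisation.} For type (1), take $a=d=1$ and $b=0$. For type (3), take $b=0$ and $(a,d)\in\{(1,0),(0,1)\}$. For type (2), take $a=\tfrac1{1+c^2}$, $b=\tfrac{c}{1+c^2}$ and $d=\tfrac{c^2}{1+c^2}$, so that $a+d=1$ and $a-d=\rho_c$. The eigenvalues of $M_p$ are $-z\pm\sqrt{(x-y)^2+8b^2}$, which yields $\alpha_{c,\pm}$ for type (2) and reduces to $x-y-z$ and $-x+y-z$ when $b=0$.
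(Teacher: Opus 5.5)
Your route is correct, but it uses a different method from the paper's. The paper argues representation-theoretically. It writes down the four characters $\chi_{\alpha,\beta}$ and a two-dimensional irreducible representation $\pi$ of the convolution algebra $(C(\KP),*)$. A dimension count then shows that the left regular representation is $\bigoplus_{\alpha,\beta}\chi_{\alpha,\beta}\oplus\pi\oplus\pi$, and the spectrum is read off as the values $\chi_{\alpha,\beta}(p)=\lambda_i$ together with the eigenvalues of $\pi(p)=M_p$. This explains the multiplicities $1,1,1,1,2,2$ structurally and makes the dependence on $p$ immediate. The cost is a tedious check that $\pi$ really is a representation, plus implicit use of semisimplicity in the dimension count. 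You instead block-diagonalise the $8\times 8$ matrix by hand. Your $V$ is exactly the sum of the four character lines and your $W$ is the $\pi$-isotypic component, so you recover the same decomposition without naming the irreducibles. In exchange you get explicit eigenvectors, and you only need the action of the six summands of $p$ on the basis.

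Two details of your sketch are wrong, though your fallbacks (entrywise verification, characteristic polynomial) would catch both.

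First, $a_{11}$ and $a_{22}$ do not send $e_1-e_4$ and $e_2-e_3$ to $a_{11},a_{22}$. The table gives $a_{11}*(e_1-e_4)=a_{11}*(e_2-e_3)=0$, similarly for $a_{22}$, and both also kill $a_{12}$ and $a_{21}$. So $a$ and $d$ act by zero on $W$ (this is $\pi(a_{11})=\pi(a_{22})=0$), which is exactly why $W$ is invariant.

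Second, the basis $e_1-e_4\pm i(e_2-e_3)$, $a_{12}\pm i a_{21}$ does not split $W$. Put $u=e_1-e_4$ and $v=e_2-e_3$. The invariant planes are $\linearspan\{u+v,\,a_{12}-ia_{21}\}$ and $\linearspan\{u-v,\,a_{12}+ia_{21}\}$, and on them $A_p$ acts by
\[
\begin{pmatrix} x-y-z & 2(1-i)b\\ 2(1+i)b & -x+y-z\end{pmatrix}
\quad\text{and}\quad
\begin{pmatrix} -x+y-z & 2(1+i)b\\ 2(1-i)b & x-y-z\end{pmatrix}.
\]
Both are unitarily equivalent to $M_p$. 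All four of these vectors are orthogonal with norm squared $4$, so the factor $\sqrt2$ comes from $\lvert 1\pm i\rvert=\sqrt2$, not from the GNS normalisation $\|a_{ij}\|^2=2$.
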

\begin{proof}
  We decompose the left regular representation of the convolution algebra \((C(\KP), *)\) into a direct sum of irreducible representations. For \(\alpha, \beta \in \{1, -1\}\), define one-dimensional representations \(\chi_{\alpha, \beta}\) by
  \begin{align*}
    &\chi_{\alpha, \beta}(e_1) = \chi_{\alpha, \beta}(e_4) = 1, && \chi_{\alpha, \beta}(e_2) = \chi_{\alpha, \beta}(e_3) = \alpha, &&&\\
    &\chi_{\alpha, \beta}(a_{11}) = 2\beta, && \chi_{\alpha, \beta}(a_{22}) = 2\alpha\beta, &&&\chi_{\alpha, \beta}(a_{12}) = \chi_{\alpha, \beta}(a_{21}) = 0.
  \end{align*}
  The remaining irreducible representation is the two-dimensional representation \(\pi\) given by
  \begin{align*}
    &\pi(e_1) = I_2, \qquad \pi(e_2) = \sigma_z, \qquad \pi(e_3) = -\sigma_z, \qquad \pi(e_4) = -I_2,\\
    &\pi(a_{11}) = \pi(a_{22}) = 0, \qquad \pi(a_{12}) = \sqrt{2}(\sigma_x - \sigma_y), \qquad \pi(a_{21}) = \sqrt{2}(\sigma_x + \sigma_y)
  \end{align*}
  where \(\sigma_x, \sigma_y, \sigma_z \in M_2(\CC)\) are the Pauli matrices. Although tedious --- it is easy to verify, using the convolution table above, that these maps are indeed irreducible representations. For dimension reasons, they form a complete set of irreducible representations and therefore the left regular representation decomposes as
  \[
    \Big(\bigoplus_{\alpha,\beta \in \{1,-1\}} \chi_{\alpha, \beta} \Big) \oplus \pi \oplus \pi.
  \]
  For the spectrum of \(A_p\) this means that
  \[
    \sigma(A_p) = \Big( \bigcup_{\alpha, \beta \in \{1,-1\}} \sigma(\chi_{\alpha, \beta}(p)) \Big) \cup \sigma(\pi(p)) \cup \sigma(\pi(p)).
  \]
  Evaluating the four characters \(\chi_{\alpha, \beta}\) at \(p\) gives \(\lambda_0, \lambda_1, \lambda_2, \lambda_3\) whereas \(\pi(p) = M_p\). This proves the claim. Substituting the different forms of \(p\) for the three differents types yields the more explicit formulas stated above.
\end{proof}

\begin{proposition}\label{prop:aut_isomorphisms}
  Let
  \[
    J =
    \begin{pmatrix}
      0 & 1 \\ 1 & 0
    \end{pmatrix}, \qquad
    D =
    \begin{pmatrix}
      1 & 0 \\ 0 & -1
    \end{pmatrix}
  \]
  and define \(*\)-automorphisms \(\beta, \gamma\) of \(C(\KP)\) by
  \begin{align*}
    &\beta(e_1) = e_1, \quad \beta(e_2) = e_3, \quad \beta(e_3) = e_2, \quad \beta(e_4) = e_4,  &&\beta(x) = J x J\\
    &\gamma(e_k) = e_k \quad (1 \leq k \leq 4), &&\gamma(x) = D x D
  \end{align*}
  for \(x \in M_2(\CC)\). Then \(\beta\) and \(\gamma\) are commuting Hopf \(*\)-automorphisms and for every admissible projection \(p \in C(\KP)\), the quantum Cayley graphs associated with \(p\), \(\beta(p)\), \(\gamma(p)\) and \(\beta \gamma (p)\) are isomorphic.

  On the parameters of admissible projections, this gives the following identifications:
  \begin{enumerate}[(1)]
    \item in type (1), \((x,y,z) \sim (y,x,z)\),
    \item in type (2),
      \[
        (x,y,z,c) \sim (x,y,z,-c) \sim (y,x,z,c^{-1}) \sim (y,x,z,-c^{-1}),
      \]
    \item in type (3), \((x,y,z,a_{11}) \sim (y,x,z,a_{22})\).
  \end{enumerate}
\end{proposition}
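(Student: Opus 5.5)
We first check that \(\beta\) and \(\gamma\) are unital \(*\)-automorphisms. On the \(\CC^4\) summand they permute or fix minimal projections. On \(M_2(\CC)\) they are conjugation by the self-adjoint unitaries \(J\) and \(D\). They commute because \(JD = -DJ\), so \(J D x D J = D J x J D\), and on \(\CC^4\) the two maps plainly commute. Both preserve \(h_\KP\), since they permute \(e_2, e_3\) and preserve the trace of the matrix block. Hence both preserve \(H = 8h_\KP\).

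The main step is to show that \(\beta\) and \(\gamma\) are Hopf automorphisms, i.e.\ \((\theta\otimes\theta)\Delta_\KP = \Delta_\KP\theta\) for \(\theta\in\{\beta,\gamma\}\). It suffices to check this on the eight basis elements. First record the action on the matrix units: \(\beta(a_{11}) = a_{22}\), \(\beta(a_{22}) = a_{11}\), \(\beta(a_{12}) = a_{21}\), \(\beta(a_{21}) = a_{12}\), while \(\gamma\) fixes \(a_{11}, a_{22}\) and sends \(a_{12}\mapsto -a_{12}\), \(a_{21}\mapsto -a_{21}\).

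Then compare formulas.
\begin{itemize}
  \item For \(\gamma\): every term of \(\Delta_\KP\) of the basis elements contains an even number of off-diagonal matrix units in \(\Delta_\KP(e_k)\), or exactly one in \(\Delta_\KP(a_{12})\) and \(\Delta_\KP(a_{21})\). The signs therefore match immediately.
  \item For \(\beta\): we check that \((\beta\otimes\beta)\Delta_\KP(e_2) = \Delta_\KP(e_3)\). This holds because swapping \(a_{11}\leftrightarrow a_{22}\) and \(a_{12}\leftrightarrow a_{21}\) turns \(i a_{21}\otimes a_{12} - i a_{12}\otimes a_{21}\) into \(i a_{12}\otimes a_{21} - i a_{21}\otimes a_{12}\).
  \item The checks for \(e_1\), \(e_4\) and the \(a_{ij}\) are similar routine bookkeeping.
\end{itemize}
This computation is the only real obstacle, and it is purely mechanical.

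Given Hopf automorphisms preserving \(H\), we next show that \(\theta\) is an algebra automorphism for convolution. Indeed \(\Fourier(\theta a) = \Fourier(a)\circ\theta^{-1}\), because \(H\circ\theta = H\). Moreover, convolution of functionals is intertwined because \(\Delta\theta = (\theta\otimes\theta)\Delta\). Therefore \(\theta(p*x) = \theta(p)*\theta(x)\), i.e.\ \(\theta A_p = A_{\theta(p)}\theta\). Together with \(H\circ\theta = H\), this is exactly an isomorphism of the quantum graphs for \(p\) and \(\theta(p)\). Since \(\theta\) commutes with the counit and antipode (Hopf automorphism) and maps projections to projections, it also preserves admissibility. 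Alternatively, \(\theta(p)\) is admissible because graph isomorphisms preserve undirectedness, irreflexivity and connectedness. Composing gives the statement for \(\beta\gamma\).

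Finally we read off the parameter identifications. The map \(\gamma\) fixes \(x,y,z\) and conjugates \(\begin{pmatrix}1&c\\c&c^2\end{pmatrix}\) to the same matrix with \(c\mapsto -c\), while fixing \(I_2\), \(a_{11}\) and \(a_{22}\). The map \(\beta\) swaps \(x\leftrightarrow y\) and sends
\[
\frac{1}{1+c^2}\begin{pmatrix}1&c\\c&c^2\end{pmatrix}\mapsto \frac{1}{1+c^2}\begin{pmatrix}c^2&c\\c&1\end{pmatrix} = \frac{1}{1+c^{-2}}\begin{pmatrix}1&c^{-1}\\c^{-1}&c^{-2}\end{pmatrix},
\]
and it swaps \(a_{11}\leftrightarrow a_{22}\). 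This gives (1)–(3), where the condition \(x+y\geq1\) in type (3) is symmetric under the swap.
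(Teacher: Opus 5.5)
Your proof is correct and follows essentially the same route as the paper. You check commutation via \(JD=-DJ\), verify \(\Delta_\KP\theta=(\theta\otimes\theta)\Delta_\KP\) on basis elements, use preservation of the counit and of \(H\) to conclude \(\theta(p*x)=\theta(p)*\theta(x)\), and read off the parameter identifications from conjugating the rank-one projections by \(J\) and \(D\). Your parity argument for \(\gamma\) and the explicit Fourier-transform computation simply spell out steps the paper treats as routine.
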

\begin{proof}
  Both \(\beta\) and \(\gamma\) are clearly \(*\)-automorphisms and they commute since \(J\) and \(D\) anticommute, so for any \(x \in M_2(\CC)\) we have
  \[
    \beta \gamma(x) = JDxDJ = (-DJ) x (-JD) = DJxJD = \gamma \beta (x).
  \]
  Moreover, it is easy to see that \(\Delta_\KP \beta = (\beta \otimes \beta) \Delta_\KP\) and \(\Delta_\KP \gamma = (\gamma \otimes \gamma) \Delta_\KP\) by checking equality on the basis vectors. Both \(\beta\) and \(\gamma\) also clearly preserve the counit and the Haar state, so they are Hopf \(*\)-automorphisms. Hence they preserve the convolution product and therefore we have for any \(\theta \in \{\beta, \gamma, \beta\gamma\}\)
  \[
    A_{\theta(p)}(\theta(x)) = \theta(p) * \theta(x) = \theta(p * x) = \theta(A_p(x))
  \]
  so \(\theta\) is a quantum graph isomorphism. The parameter identifications follow from \(J r_c J = r_{1/c}\) and \(D r_c D = r_{-c}\) where
  \[
    r_c = \frac{1}{1+c^2}
    \begin{pmatrix}
      1 & c \\ c & c^2
    \end{pmatrix}.
  \]
\end{proof}

It turns out that the in the type (2) case, all possible quantum graph isomorphisms are realized by the Hopf \(*\)-automorphisms above, but in the type (1) and type (3) cases, there are more identifications possible that do not come from a Hopf \(*\)-automorphism. The following theorem gives the full classification of isomorphisms of quantum Cayley graphs of \(\KP\).

\begin{theorem}\label{thm:isomorphism_classification}
  Let
  \[
    p = x e_2 + y e_3 + z e_4 + r, \qquad q = x' e_2 + y' e_3 + z' e_4 + r'
  \]
  be two admissible projections. The associated quantum Cayley graphs are isomorphic if and only if \(p\) and \(q\) are of the same type and the corresponding condition below holds.
  \begin{enumerate}[(1)]
    \item If \(p\) and \(q\) are of type (1), then
      \[
        x + y + z = x' + y' + z'.
      \]
    \item If \(p\) and \(q\) are of type (2) and
      \[
        r = \frac{1}{1+c^2}
        \begin{pmatrix}
          1 & c \\ c & c^2
        \end{pmatrix}, \qquad r' = \frac{1}{1+c'^2}
        \begin{pmatrix}
          1 & c' \\ c' & c'^2
        \end{pmatrix},
      \]
      then either
      \[
        (x, y, z) = (x', y', z') \quad \textrm{and} \quad c' \in \{c, -c\}
      \]
      or
      \[
        (x, y, z) = (y', x', z') \quad \textrm{and} \quad c' \in \{c^{-1}, -c^{-1}\}.
      \]
    \item If \(p\) and \(q\) are of type (3), then
      \[
        z = z' \quad \textrm{and} \quad x + y = x' + y'.
      \]
  \end{enumerate}
\end{theorem}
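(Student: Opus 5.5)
The plan is to prove necessity with isomorphism invariants and sufficiency with explicit isomorphisms. The ones coming from Proposition~\ref{prop:aut_isomorphisms} are supplemented by additional, non-Hopf ones in types (1) and (3).

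\emph{Necessity.} An isomorphism \(\theta\) of quantum graphs is a \(\psi\)-preserving \(*\)-automorphism of \(C(\KP)\) with \(\theta A_p = A_q \theta\). Since \(\theta\) preserves \(H\), it is unitary for the GNS inner product, so \(A_p\) and \(A_q\) have the same spectrum with multiplicities. Proposition~\ref{prop:spectrum} then does most of the work.
\begin{itemize}
  \item The largest eigenvalue \(\lambda_0\) is the degree. It equals \(x+y+z+4\) in type (1) and \(x+y+z+2\) in types (2) and (3).
  \item To separate type (1) from the other types when the degrees coincide, compare the whole multisets. Their sums, i.e.\ \(\Tr A_p\), are computable from the listed eigenvalues.
  \item Types (2) and (3) are separated by the double eigenvalues. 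In type (3) the pair \(\{\mu_+,\mu_-\}\) equals \(\{-z\pm|x-y|\}\). In type (2) it equals \(-z\pm\sqrt{(x-y)^2+8c^2/(1+c^2)^2}\), where the term \(8c^2/(1+c^2)^2\in(0,2]\) is nonzero. Combining this with the simple eigenvalues \(\lambda_i\) rules out any coincidence. This is a finite check over \(x,y,z\in\{0,1\}\), which I would organize via \(\operatorname{tr}M_p=-2z\) and \(\det M_p\).
  \item Within type (1), the spectrum determines \(x+y+z\). Within type (3), it determines \(z\) and \(x+y\) (from \(\lambda_0\) and the pair \(-x-y+z\pm 2\)).
  \item Within type (2), the spectrum determines \(x+y+z\), \(z\), \(|x-y|\) and \(|\rho_c|\), hence \(c^2\) up to \(c^2\leftrightarrow c^{-2}\).
\end{itemize}

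The remaining ambiguity in type (2) occurs when \(x\neq y\). There the spectrum does not tell \((x,y,c)\) apart from \((x,y,c^{-1})\); the claim is that only the swap \((y,x,c^{-1})\) is allowed. For this I would use a finer invariant. The map \(\theta\) preserves the center \(Z=\linearspan\{e_1,\dots,e_4,I_2\}\), and it permutes the \(e_k\) while fixing \(I_2\). Hence it commutes with the \(H\)-preserving conditional expectation \(E_Z\). Consequently the weighted \(5\times5\) matrix \(\bigl(H(z_i A_p(z_j))\bigr)\), indexed by the minimal central projections \(z_i\), is an invariant up to permutations fixing the \(I_2\)-vertex.

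Using the convolution table, \(e_2*r\) and \(e_3*r\) swap the diagonal entries \(a\) and \(d\) of \(r\). The entries linking \(I_2\) to \(e_1,e_4\) versus \(e_2,e_3\) then involve \(a=1/(1+c^2)\) and \(d=c^2/(1+c^2)\) in a way coupled with \(x,y\). I expect this to show that \(c\mapsto c^{-1}\) forces \((x,y)\mapsto(y,x)\). The sign \(\pm c\) is already handled by \(\gamma\), and conversely every listed identification in type (2) is realized by Proposition~\ref{prop:aut_isomorphisms}.

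\emph{Sufficiency in types (1) and (3).} Here \(\beta\) gives only \(x\leftrightarrow y\), so I must also realize \((1,0,0)\sim(0,0,1)\) in type (1), and for type (3) the analogous moves that keep \(z\) fixed while changing how \(x+y\) is distributed between \(x\) and \(y\) in combination with the choice \(a_{11}\) versus \(a_{22}\). I would search for \(*\)-automorphisms \(\theta\) of the form
\begin{itemize}
  \item a permutation of \(e_2,e_3,e_4\) fixing \(e_1\) (forced by \(\psi\)-invariance together with the degree structure), combined with
  \item \(\Ad U\) on \(M_2(\CC)\) for a unitary \(U\), for example \(U=\tfrac{1}{\sqrt2}\begin{pmatrix}1&i\\1&-i\end{pmatrix}\) or a Hadamard-type matrix.
\end{itemize}
Such a \(\theta\) is not required to be comultiplicative; I only need \(\theta(p*w)=q*\theta(w)\) for the eight basis vectors \(w\). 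Since \(p\) and \(q\) have a simple form, this reduces to a small table check. In type (1) the \(r\)-part is \(I_2\), so the problem reduces to matching the \(e_k\)-convolution action twisted by \(U\). Finding the right \(U\) and permutation is the main obstacle. I would first try the permutation \(e_2\leftrightarrow e_4\) (respectively \(e_3\leftrightarrow e_4\)) and solve for \(U\) from the convolution rows of \(e_2\) and \(e_4\) acting on the \(a_{ij}\).
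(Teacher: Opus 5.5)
\textbf{Gap 1: the invariant you propose for the hard case does not depend on $c$.} For type (2) with $x\neq y$ and $c^2\neq 1$ you correctly note that the spectrum cannot separate $(x,y,z,c)$ from $(x,y,z,c^{-1})$. But the matrix $\bigl(H(z_iA_p(z_j))\bigr)$ cannot separate them either:
\begin{itemize}
\item On $M_2(\CC)$ one has $H=2\Tr$, and each $r*e_k$ is a rank-one projection. Hence $H(I_2A_p(e_k))=2\Tr(r*e_k)=2$ for every $k$. The swap $a\leftrightarrow d$ caused by $e_2*$ and $e_3*$ vanishes under the trace, since only $a+d=1$ survives.
\item Likewise $A_p(I_2)=\lvert S\rvert I_2+2\cdot 1_{\CC^4}$, so $H(e_kA_p(I_2))=2$ and $H(I_2A_p(I_2))=4\lvert S\rvert$.
\item The $e$-block is just the adjacency matrix of the Cayley graph of the Klein four group $V$ with respect to $S$.
\end{itemize}
So the matrix is literally the same for $(x,y,z,c)$ and $(x,y,z,c^{-1})$, and nothing in your argument rules out the forbidden identification.

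The obstruction is one of orientation, which no trace or central compression can detect. The paper writes an arbitrary $*$-automorphism as $P_\sigma\oplus\Ad_U$ and compares the $M_2$-components of $\theta A_p(e_k)=A_q\theta(e_k)$. This gives $U(r*e_k)U^*=r'*e_{\sigma(k)}$, i.e.\ $R\Phi_c=\Phi_{c'}P_\sigma$. Here $\Phi_c$ is the $3\times 4$ matrix of Bloch vectors of $r*e_1,\dots,r*e_4$, and $R\in\operatorname{SO}(3)$ is induced by $\Ad_U$. The argument then runs:
\begin{itemize}
\item After normalizing $c'=c$, $\Phi_c$ is invertible on $(1,1,1,1)^\perp$ when $c^2\neq 1$. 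Taking determinants forces $\sigma$ to be even.
\item For $w\in M_2(\CC)$ one has $e_2*w=w*e_3$, $e_3*w=w*e_2$ and $e_4*w=w*e_4$. These tie the action of $e_j*$ on Bloch vectors to the Cayley action of $\tau(j)$, where $\tau=(2\,3)$.
\item Combine this with the diagonal blocks $S'=\bar\sigma(S)$ and $RT_S=T_{S'}R$, where $T_S=\sum_{j\in S}e_j*(\cdot)$ on $\RR^3$. This yields $\bar\sigma\tau(S)=\tau\bar\sigma(S)$, and since $\bar\sigma$ is a $3$-cycle or the identity, $\bar\sigma(S)=S$.
\end{itemize}
An orientation-sensitive input such as $\det R=+1$ is the missing idea.

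\textbf{Gap 2: the spectrum does not separate type (1) from type (3).} By Proposition~\ref{prop:spectrum}:
\begin{itemize}
\item $p=I_2$ and $p=e_2+e_3+a_{11}$ both have spectrum $\{4,-4,0,0,0,0,0,0\}$;
\item $p=e_2+I_2$ and $p=e_2+e_3+e_4+a_{11}$ both have spectrum $\{5,-3,1,1,-1,-1,-1,-1\}$;
\item $\Tr A_p=0$ for every admissible $p$, so the trace carries no information.
\end{itemize}
This is easy to repair. Your own central matrix has $(I_2,e_k)$-entries $4$ in type (1) versus $2$ in type (3). The paper simply uses that $r*e_k=I_2$ in type (1) but is rank one otherwise. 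Still, the step as you wrote it is false.

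\textbf{Sufficiency in types (1) and (3) is left as a search.} It becomes immediate once you see how $e_j*$ acts on traceless self-adjoint matrices, identified with $\RR^3$. It acts as $2u_ju_j^\top-I_3$ for the orthonormal vectors $u_2=\tfrac{1}{\sqrt2}(1,1,0)$, $u_3=\tfrac{1}{\sqrt2}(1,-1,0)$, $u_4=(0,0,1)$.
\begin{itemize}
\item In type (1), take $\bar\sigma$ with $\bar\sigma(S)=S'$, and a rotation carrying $\linearspan\{u_j: j\in S\}$ onto $\linearspan\{u_j: j\in S'\}$.
\item In type (3), $U_0=\begin{pmatrix}0&1\\ i&0\end{pmatrix}$ swaps $a_{11}$ and $a_{22}$. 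It induces $2u_2u_2^\top-I_3$, which commutes with the other two maps. Composing with $\beta$ gives every identification.
\end{itemize}
Incidentally, nothing forces $e_1$ to be fixed, but as a search restriction this is harmless.
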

\begin{proof}
  We identify \(\{1,2,3,4\}\) with the Klein four group \(V \cong \ZZ_2^2\) in such a way that \(1\) corresponds to the identity element and the convolution of basis vectors in \(\CC^4\) corresponds to multiplication in the group, \ie \(e_j * e_k = e_{jk}\) (where the multiplication \(jk\) is in \(V\)). We denote the sets of non-zero indices in the \(\CC^4\)-part of \(p\) and \(q\) by \(S \coloneqq \{j \in \{2,3,4\} \mid e_j p = e_j\}\) and \(S' \coloneqq \{j \in \{2,3,4\} \mid e_j q = e_j\}\). Every \(*\)-automorphism of \(C(\KP)\) is of the form \(\theta_{\sigma, U} = P_\sigma \oplus \Ad_U\) for some permutation \(\sigma \in \{1,2,3,4\}\) and a unitary \(U \in M_2(\CC)\). We write \(\sigma(k) = g \overline{\sigma}(k)\) for unique \(g \in V\) and \(\overline{\sigma} \in \operatorname{Aut}(V) \cong S_3\). The Pauli matrices \(\sigma_x, \sigma_y, \sigma_z\) form a basis of the self-adjoint traceless matrices in \(M_2(\CC)\) so we can identify this subspace with \(\RR^3\). If we set
  \begin{align*}
    u_2 &= \frac{1}{\sqrt{2}} (\sigma_x + \sigma_y) \cong \frac{1}{\sqrt{2}}(1,1,0) \in \RR^3\\
    u_3 &= \frac{1}{\sqrt{2}} (\sigma_x - \sigma_y) \cong \frac{1}{\sqrt{2}}(1,-1,0) \in \RR^3\\
    u_4 &= \sigma_z \cong (0,0,1) \in \RR^3
  \end{align*}
  then the convolution table gives
  \[
    e_j * I_2 = I_2, \qquad Q_j \coloneqq (e_j * \cdot)\vert_{\RR^3} = 2 u_j u_j^\top - I_3, \qquad T_S \coloneqq \sum_{j \in S} Q_j = 2 P_{E_S} - \lvert S \rvert I_3,
  \]
  where \(P_{E_S}\) is the orthogonal projection onto \(E_S \coloneqq \linearspan_\RR \{u_j \mid j \in S\}\). Let \(R \in \operatorname{SO}(3)\) be the rotation induced by \(\Ad_U\) and put \(r_k \coloneqq r * e_k\) and \(r_k' \coloneqq r' * e_k\). Comparing the blocks of the intertwining relation
  \begin{equation}
    \theta_{\sigma, U} A_p = A_q \theta_{\sigma, U} \label{eq:iso_intertwining}
  \end{equation}
  relative to the decomposition \(C(\KP) = \CC^4 \oplus M_2(\CC)\) yields
  \begin{align}
    S' &= \overline{\sigma}(S), \label{eq:iso_rel1}\\
    R T_S &= T_{S'} R, \label{eq:iso_rel2}\\
    U r_k U^* &= r_{\sigma(k)}', \qquad (1 \leq k \leq 4). \label{eq:iso_rel3}
  \end{align}
  Indeed, in the scalar block of \(A_p\) we have the adjacency matrix \(C_S\) of the Cayley graph of \(V\) with respect to \(S\) and \(P_\sigma C_S = C_{S'} P_\sigma\) is equivalent to \(S' = \overline{\sigma}(S)\), which implies \(\lvert S \rvert = \lvert S' \rvert\), so the equality (\ref{eq:iso_intertwining}) on the subspace \(\CC I_2\) follows from this already. By complex linearity, we can reduce the \(M_2(\CC)\)-block to \(\RR^3\), \ie (\ref{eq:iso_rel2}). The off-diagonal blocks only give one equation since they are adjoints of each other by the self-adjointness of \(A_p\) and \(A_q\).

  \vspace*{1em}

  The third equation (\ref{eq:iso_rel3}) excludes isomorphisms between different types since in type (1), all \(r_k\) are equal to \(I_2\), in type (2) the \(r_k\) are four distinct rank-one projections and in type (3) the \(r_k\) are two sets of two rank-one projections.

  \vspace*{1em}

  Now let \(p\) and \(q\) both be of type (1). Then (\ref{eq:iso_rel3}) is automatic and an isomorphism exists if and only if \(\lvert S \rvert = \lvert S' \rvert\), because in that case we can choose a \(\sigma \in S_4\) with \(\overline{\sigma}(S) = S'\) and \(R \in \operatorname{SO}(3)\) with \(R E_S = E_{S'}\) (which is equivalent to (\ref{eq:iso_rel2})) and lift it to a unitary \(U \in M_2(\CC)\).

  \vspace*{1em}

  Let \(p\) and \(q\) both be of type (2) and assume they yield isomorphic quantum graphs. The Bloch vectors in \(\RR^3\) corresponding to \(r_1, r_2, r_3, r_4\) are the columns of
  \[
    \Phi_c =
    \begin{pmatrix}
      \xi_c & 0 & 0 & -\xi_c\\
      0 & -\xi_c & \xi_c & 0\\
      \zeta_c & -\zeta_c & -\zeta_c & \zeta_c
    \end{pmatrix}
  \]
  where
  \[
    \xi_c = \frac{2c}{1+c^2}, \qquad \zeta_c = \frac{1-c^2}{1+c^2}.
  \]
  The unitary conjugation in (\ref{eq:iso_rel3}) corresponds to a rotation on the Bloch sphere, so we can rewrite it as
  \[
    R\Phi_c = \Phi_{c'} P_\sigma.
  \]
  Because both \(R\) and \(P_\sigma\) are orthogonal, this means that \(\Phi_c\) and \(\Phi_{c'}\) must have the same multiset of singular values, which is given by
  \[
    \{\{2\xi_c^2, 2\xi_c^2, 4\zeta_c^2\}\}.
  \]
  The multiplicity determines \(\xi_c^2\) unless all three values coincide, in which case \(\xi_c^2 = 2/3\). Thus, equality of multisets for \(c\) and \(c'\) implies \(\xi_c^2 = \xi_{c'}^2\) and \(\zeta_c^2 = \zeta_{c'}^2\). Since \(t \mapsto (1-t)/(1+t)\) is injective on \((0,\infty)\) and \(\zeta_{1/c} = -\zeta_c\), we infer that \(c' \in \{c, -c, c^{-1}, -c^{-1}\}\). Using Proposition~\ref{prop:aut_isomorphisms} we may now replace \(q\) by some isomorphic projection and assume \(c = c'\), replacing \(S'\) with \(\tau(S')\) (where \(\tau = (2 \ 3)\)) if \(c' \in \{c^{-1}, -c^{-1}\}\).

  Suppose for now that \(\zeta_c \neq 0\). On \(K \coloneqq (1,1,1,1)^\perp\) the map \(\Phi_c\) is invertible, so taking determinant in \(R \Phi_c\vert_K = \Phi_c\vert_K P_\sigma\vert_K\) gives \(\sgn(\sigma) = 1\). Since left-translations of the Klein four group induce even permutations, we have \(\sgn(\overline{\sigma}) = 1\) as a permutation of \(\{2,3,4\}\). We have
  \[
    Q_j \Phi_c = \Phi_c C_{\{\tau(j)\}}
  \]
  and by summing over \(j \in S\)
  \[
    T_S \Phi_c = \Phi_c C_{\tau(S)}.
  \]
  Combining this with \(R \Phi_c = \Phi_c P_\sigma\) and \(RT_S = T_{S'}R\) gives
  \[
    \Phi_c P_\sigma C_{\tau(S)} = \Phi_c C_{\tau(S')}P_\sigma.
  \]
  On \(K\) we can cancel \(\Phi_c\) and get \(P_\sigma C_{\tau(S)} = C_{\tau(S')} P_\sigma\), but this also holds on the subspace spanned by \((1,1,1,1)\) because \(\lvert S \rvert = \lvert S' \rvert\). But this is equivalent to \(\overline{\sigma}\tau(S) = \tau(S')\) so together with (\ref{eq:iso_rel1}) we get
  \[
    \overline{\sigma}(S) = \tau \overline{\sigma} \tau (S) = \overline{\sigma}^{-1}(S).
  \]
  (Note that \(\tau \overline{\sigma} \tau = \overline{\sigma}^{-1}\) since \(\overline{\sigma}\) is even.) Since \(\overline{\sigma}^3 = \id\) this implies that \(\overline{\sigma}(S) = S\) and hence \(S = S'\).

  If \(\zeta_c = 0\), then \(c = \pm 1\) and \(\ker(\Phi_c\vert_K) = \linearspan_\RR (1,-1,-1,1)\). Hence \(P_\sigma\) must preserve this subspace, so \(\sigma\) preserves the unordered partition \(\{\{1,4\}, \{2,3\}\}\). But then \(\overline{\sigma} (4) = 4\), so \(\overline{\sigma} \in \{\id, \tau\}\). Therefore \(S' = S\) or \(S' = \tau(S)\).

  Together with the normalization \(c = c'\) this yields exactly the two alternatives in the statement of the theorem. The reverse direction follows from Proposition~\ref{prop:aut_isomorphisms}.

  \vspace*{1em}

  Let \(p\) and \(q\) both be of type (3). Then we have \(r_1 = r_4\) and \(r_2 = r_3\) irrespective of the choice of \(r\). So if there exists an isomorphism, we must have \(\overline{\sigma} \in \{\id, \tau\}\) (by similar arguments as in the type (2) case), yielding \(z = z'\) and \(x + y = x' + y'\). For the reverse direction observe that the unitary
  \[
    U_0 =
    \begin{pmatrix}
      0 & 1\\i & 0
    \end{pmatrix}
  \]
  exchanges \(a_{11}\) and \(a_{22}\) and the corresponding rotation in \(\operatorname{SO}(3)\) is
  \[
    Q_2 =
    \begin{pmatrix}
      0 & 1 & 0\\
      1 & 0 & 0\\
      0 & 0 & -1
    \end{pmatrix}
  \]
  which commutes with both \(Q_3\) and \(Q_4\), so with every \(T_S\). Hence \(\theta_{\id, U_0}\) is an isomorphism which exchanges the two possible choices of \(r\) while leaving \(S\) fixed. Concatenation with \(\beta\) from Proposition~\ref{prop:aut_isomorphisms} realizes all parameter identifications in the type (3) case.
\end{proof}

\begin{theorem}\label{thm:chromatic_numbers}
  Let
  \[
    p = x e_2 + y e_3 + z e_4 + r \in C(\KP)
  \]
  be an admissible projection, denote the associated quantum Cayley graph by \(G_p\) and put
  \[
    S \coloneqq \{j \in \{2,3,4\} \mid \textrm{the } e_j \textrm{ component of } p \textrm{ is } 1\}.
  \]
  Then the chromatic number of \(G_p\) is
  \[
    \chi(G_p) =
    \begin{cases}
      \infty, & \textrm{if } S = \{2,3,4\},\\
      2, & \textrm{if } S = \emptyset,\\
      2, & \textrm{if } p \textrm{ is of type (3) and } z = 0,\\
      4, & \textrm{otherwise.}
    \end{cases}
  \]
\end{theorem}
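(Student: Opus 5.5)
We work throughout with Lemma~\ref{lem:coloring_equivalence}: a classical \(k\)-coloring of \(G_p\) is a partition of unity \(P_1,\dots,P_k\) by projections in \(C(\KP)\) with \(P_i\,(p*P_i)=0\). Write each \(P_i = \sum_k \epsilon^i_k e_k + Q_i\) with \(Q_i \in M_2(\CC)\) a projection. Then \(p * P_i\) splits according to the convolution table. The \(s*(\text{scalar})\) part, with \(s = xe_2+ye_3+ze_4\), is the Klein four group Cayley graph with connection set \(S\). The \(s * Q_i\) part equals \(\sum_{j\in S} e_j * Q_i\), which by the notation of Theorem~\ref{thm:isomorphism_classification} acts on the Bloch vector of \(Q_i\) by the reflections \(Q_j = 2u_ju_j^\top - I_3\). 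The \(r*(\text{scalar})\) part is a matrix, and \(r * Q_i\) is scalar. The proof then splits into upper bounds, given by explicit colorings, and lower bounds.

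\emph{Upper bounds.} If \(S=\emptyset\), take \(P_1 = e_1+e_2+e_3+e_4\) and \(P_2 = I_2\). Then \(p*P_1 = r*P_1\) lies in \(M_2(\CC)\), and \(p*I_2 = r*I_2 + \lvert S\rvert I_2\) is scalar, so both products vanish. For type (3) with \(z=0\), say \(r = a_{11}\) (the case \(a_{22}\) is symmetric, or use Theorem~\ref{thm:isomorphism_classification}), take \(P_1 = e_1+e_4+a_{22}\) and \(P_2 = e_2+e_3+a_{11}\). The table shows that \(p*P_1\) is a combination of \(e_2,e_3,a_{11}\), and \(p*P_2\) is a combination of \(e_1,e_4,a_{22}\), so both are killed by the respective \(P_i\). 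In the remaining finite cases (\(1\le \lvert S\rvert\le 2\)) I would use four colors: \(P_k = e_k + \) (a rank-one piece). Choose a unit Bloch vector \(v\) orthogonal to all \(u_j\), \(j\in S\). The rank-one projections \(Q_\pm\) with Bloch vectors \(\pm v\) then satisfy \(Q_\pm(s*Q_\pm)=0\). Distribute \(Q_+, Q_-\) and the four \(e_k\) among four colors so that the scalar support of \(r*Q_\pm\) (computed from the table) and the matrix \(r*e_k\) avoid their own color. I expect a direct check for each \(S\) and each type.

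\emph{Lower bounds.} The key geometric observation is this: if \(Q\) is the matrix part of a color and \(Q \neq 0\), then \(Q(s*Q)=0\) forces \(Q\) to be rank one with Bloch vector \(v\) satisfying \(Q_jv\cdot v\) minimal for all \(j\in S\), i.e.\ \(u_j\cdot v = 0\). Rank two is excluded, since then \(s*I_2 = \lvert S\rvert I_2\neq 0\).
\begin{itemize}
\item If \(S=\{2,3,4\}\), the \(u_j\) span \(\RR^3\), so no color can contain any matrix part. Hence no coloring exists and \(\chi=\infty\).
\item Otherwise \(S\neq\emptyset\). The two colors \(Q_\pm\) covering \(I_2\) are distinct, which already forces at least two colors.
\item For the value \(4\), one must show that these two colors cannot also absorb all of \(e_1,\dots,e_4\) together with at most one further scalar color. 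Here I would use that \(r*Q_\pm\) is a nonzero scalar element whose support is determined by the Bloch vector of \(r\) and by \(v\). I would also use that \(r*e_k\) is a rank-one matrix (types (2), (3)) or \(I_2\) (type (1)) that must avoid \(Q\) of the same color. Combined with the Cayley graph of \(V\) with respect to \(S\), this should rule out three colors.
\end{itemize}

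I expect this last exclusion of \(3\)-colorings to be the main obstacle. It needs a case analysis over the type of \(p\), over \(S\) (up to the symmetries of Proposition~\ref{prop:aut_isomorphisms}), and, in type (2), over \(c\), which enters through the Bloch vectors \(\Phi_c\). I would first reduce to the case where each color is a sum of some \(e_k\) and at most one rank-one \(Q\). Then I would tabulate the scalar supports of \(r*Q_\pm\) as functions of \(v \perp E_S\), and show that each \(Q_\pm\) can share its color with at most one \(e_k\). That leaves at least two of the \(e_k\), which are adjacent in the classical Cayley graph of \(V\) (or blocked via \(r*e_k\)), needing separate colors.
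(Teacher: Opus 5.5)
\textbf{The lower bound $\chi(G_p)\ge 4$ is not established.} Your setup is the same as the paper's: Lemma~\ref{lem:coloring_equivalence}, splitting each color into a scalar part and a matrix part, and the observation that a nonzero matrix part must be rank one with Bloch vector $v\perp u_j$ for all $j\in S$. Your two-colorings for $S=\emptyset$ and for type (3) with $z=0$ are exactly the paper's. But you leave the lower bound as an expectation, and the route you sketch does not close. Knowing that each of $Q_\pm$ shares its color with at most one $e_k$ only puts at least two $e_k$ into the third, purely scalar, color. For $1\le|S|\le 2$, however, $\Cay(V,S)$ is a perfect matching or a $4$-cycle, so it has independent pairs, e.g.\ $\{2,4\}$ for $S=\{2\}$ and $\{1,4\}$ for $S=\{2,3\}$. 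Nothing you have said forces the leftover $e_k$ to be adjacent, so three colors are not excluded.

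\textbf{The missing idea is that mixing is essentially impossible.} A color $t+Q$ with $Q\neq 0$ and $e_k\le t$ needs $Q(r*e_k)Q=0$, i.e.\ the Bloch vector $v$ of $Q$ is antipodal to that of $r*e_k$. This already forces the $e_k$-coefficient of $r*Q$ to vanish, so your two conditions are really one. Check it type by type:
\begin{itemize}
\item Type (1): $r*e_k=I_2$, so mixing never happens.
\item Type (3) with $z=1$: $r*e_k\in\{a_{11},a_{22}\}$ has Bloch vector $(0,0,\pm1)$, while $4\in S$ forces $c_z=0$, so again no mixing.
\item Type (2): every column of $\Phi_c$ has exactly one nonzero entry among its first two coordinates, which is incompatible with $c_x=\pm c_y$. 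So mixing requires $S=\{4\}$ and $\zeta_c=0$, i.e.\ $c=\pm1$. Even then the two matrix colors are either $\frac12(I_2\pm\sigma_x)$, which can absorb only $e_1,e_4$, or $\frac12(I_2\pm\sigma_y)$, which can absorb only $e_2,e_3$. Either way the other pair, an edge of $\Cay(V,\{4\})$, is left for the single pure scalar color.
\end{itemize}
Once mixing is ruled out, $e_1+\dots+e_4$ needs two colors (since $\Cay(V,S)$ has an edge) and $I_2$ needs two, giving $\chi\ge 4$.

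\textbf{Your four-coloring is also mis-specified.} The form ``$P_k=e_k+$ a rank-one piece'' cannot work in type (1), where no color can mix. The simple choice is $Q_+$, $Q_-$, $\sum_{k\in X}e_k$, $\sum_{k\notin X}e_k$, with $X$ one side of a bipartition of $\Cay(V,S)$. For these unmixed colors only the conditions on $S$ and on $v$ need checking, because $r*Q$ is scalar and $r*t$ lies in $M_2(\CC)$.
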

\begin{proof}
  Put \(s \coloneqq \sum_{j \in S} e_j\) such that \(p = s + r\) and for a projection \(P \in C(\KP)\) write \(P = t + q\) with \(t = \sum_{j \in T} e_j\) for some \(T \subseteq \{1,2,3,4\}\) and a projection \(q \in M_2(\CC)\). Denote the quantum adjacency operator corresponding to \(p\) by \(A_p\). We use the characterization of colorings from Lemma~\ref{lem:coloring_equivalence}. Since \(A_p(P) \geq 0\), we have \(P A_p(P) = 0\) if and only if \(P A_p(P) P = 0\). We have
  \begin{align}
    P A_p(P) P &= (t + q) \big[ (s + r) * (t + q) \big] (t + q) \notag\\
    &= (t + q) \big[ \underbrace{s * t + r * q}_{\in \CC^4} + \underbrace{s * q + r * t}_{\in M_2(\CC)} \big] (t + q) \notag\\
    &= \sum_{l \in T} \big(m(l) + \langle e_l, r * q \rangle \big) e_l + \sum_{j \in S} q(e_j * q)q + \sum_{k \in T} q(r * e_k)q \notag\\
    &= \sum_{l \in T} \big(m(l) + 2 \Tr((r * e_l)q) \big) e_l + \sum_{j \in S} q(e_j * q)q + \sum_{k \in T} q(r * e_k)q \label{eq:coloring_sum}
  \end{align}
  where \(m(l) = \lvert \{(i,j) \in S \times T \mid ij = l\} \rvert\) and the multiplication \(ij\) is in the Klein four group as in the proof of Theorem~\ref{thm:isomorphism_classification}. Since all of the summands above are non-negative, it holds that \(P A_p(P) P = 0\) if and only if every term in (\ref{eq:coloring_sum}) vanishes. Therefore, we have that \(P A_p(P) = 0\) if and only if the following holds:
  \begin{enumerate}[(i)]
    \item \(m(l) = 0\) for all \(l \in T\), or equivalently, \(T\) is an independent set in the classical Cayley graph of \(V\) with generating set \(S\);
    \item \(q(e_j * q)q = 0\) for all \(j \in S\);
    \item \(q(r * e_k)q = 0\) for all \(k \in T\).
  \end{enumerate}
  Note that condition (iii) already implies that \(\Tr((r*e_k)q) = 0\).

  We make condition (ii) more explicit: It is clear that \(q ( e_j * q) q \neq 0\) if \(q = I_2\), so it suffices to consider rank one projections for \(q\). Then \(e_j * q\) is also a rank one projection and if we write \(q = \frac{1}{2}(I_2 + c_x \sigma_x + c_y \sigma_y + c_z \sigma_z)\) in the Pauli basis then
  \begin{align*}
    e_2 * q &= \frac{1}{2}(I_2 + c_y \sigma_x + c_x \sigma_y - c_z \sigma_z),\\
    e_3 * q &= \frac{1}{2}(I_2 - c_y \sigma_x - c_x \sigma_y - c_z \sigma_z),\\
    e_4 * q &= \frac{1}{2}(I_2 - c_x \sigma_x - c_y \sigma_y + c_z \sigma_z).
  \end{align*}
  Since two rank one projections in \(M_2(\CC)\) are orthogonal if and only if their Bloch vectors are antipodal, we see that
  \begin{align}
    q(e_2 * q)q = 0 &\iff c_x = -c_y \notag\\
    q(e_3 * q)q = 0 &\iff c_x = c_y\label{eq:coloring_condition}\\
    q(e_4 * q)q = 0 &\iff c_z = 0.\notag
  \end{align}
  Thus if \(S = \{2,3,4\}\), then \(q = 0\), so no coloring can exist in that case, \ie \(\chi(G_p) = \infty\).

  If \(S \subseteq \{2,3,4\}\) is a non-empty proper subset, the following table gives Pauli coefficients satisfying (\ref{eq:coloring_condition}) and an independent set \(X\) in the Cayley graph \(\Cay(V, S)\):
  \begin{center}
    \begin{tabular}{ c | c | c }
      \(S\) & \((c_x, c_y, c_z)\) & \(X\) \\
      \hline
      \(\{2\}\) & \((0,0,1)\) & \(\{1,4\}\) \\
      \(\{3\}\) & \((0,0,1)\) & \(\{1,4\}\) \\
      \(\{4\}\) & \((1,0,0)\) & \(\{1,2\}\) \\
      \(\{2,3\}\) & \((0,0,1)\) & \(\{1,4\}\) \\
      \(\{2,4\}\) & \(\frac{1}{\sqrt{2}}(1,-1,0)\) & \(\{1,3\}\) \\
      \(\{3,4\}\) & \(\frac{1}{\sqrt{2}}(1,1,0)\) & \(\{1,2\}\)
    \end{tabular}
  \end{center}
  So in all of these cases, a four coloring is given by
  \begin{equation}
    q, \qquad I_2 - q, \qquad \sum_{j \in X} e_j, \qquad \sum_{j \notin X} e_j \label{eq:coloring}
  \end{equation}
  where \(q \in M_2(\CC)\) is the rank projection corresponding to the above coefficients in the Pauli basis (note that condition (iii) is always fulfilled here), hence \(\chi(G_p) \leq 4\). It remains to be seen in which cases a smaller coloring exists.

  If \(p\) is of type (1), then \(r = I_2\) and \(r * e_j = I_2\) for all \(j\). Condition (iii) says that a coloring projection can not be non-zero in the \(M_2\)-part and the scalar part at the same time. The Cayley graph \(\Cay(V, S)\) contains at least one edge, so we need at least two colors for the scalar part by condition (i). By condition (ii), we also need at least two colors for the \(M_2\)-part. Therefore, \(\chi(G_p) = 4\).

  Now let \(p\) be of type (2). As in the proof of Theorem~\ref{thm:isomorphism_classification}, we have that the Bloch vectors of \(r * e_j\) are given by
  \[
    (\xi_c, 0, \zeta_c), \quad (0, -\xi_c, -\zeta_c), \quad (0, \xi_c, -\zeta_c), \quad (-\xi_c, 0, \zeta_c)
  \]
  respectively, where
  \[
    \xi_c = \frac{2c}{1+c^2}, \qquad \zeta_c = \frac{1-c^2}{1+c^2}.
  \]
  Condition (iii) says that a coloring projection can be non-zero in the scalar part as well as in the \(M_2\) part only if the Bloch vector of the \(M_2\) part is antipodal to the vectors in the above list corresponding to the scalar parts. Comparing with (\ref{eq:coloring_condition}) shows that this can never happen except when \(S = \{4\}\) and \(c = \pm 1\). Even in this case, the only "mixed" coloring projections that are possible are
  \[
    e_1 + \frac{1}{2}(I_2 \pm \sigma_x), \quad e_2 + \frac{1}{2}(I_2 \mp \sigma_y), \quad e_3 + \frac{1}{2}(I_2 \pm \sigma_y), \quad e_4 + \frac{1}{2}(I_2 \mp \sigma_x),
  \]
  the sign depending on the sign of \(c\). Since \(\{1,4\}\) and \(\{2,3\}\) are edges (and not independent sets) in \(\Cay(V, \{4\})\) we still need four projections to color \(G_p\), so \(\chi(G_p) = 4\).

  Suppose that \(p\) is of type (3) and assume without loss of generality that \(r = a_{11}\) (the case \(r = a_{22}\) works analogously). Then \(r * e_1 = r * e_4 = a_{11}\) and \(r * e_2 = r * e_3 = a_{22}\). If \(z = 0\) (\ie \(4 \notin S\)), the projections
  \[
    e_2 + e_3 + r, \qquad e_1 + e_4 + (I_2 - r)
  \]
  form a two-coloring. If \(z = 1\) and \(x + y = 1\), then condition (\ref{eq:coloring_condition}) forces the matrix part of any coloring projection to have vanishing \(\sigma_z\)-coordinate and non-zero \(\sigma_x\)- and \(\sigma_y\)-coordinates, so it can not be \(a_{11}\) or \(a_{22}\). Thus, no coloring projection can be non-zero in the scalar as well as the matrix part. Hence, \(\chi(G_p) = 4\).

  Finally, if \(S = \emptyset\), then the projections
  \[
    e_1 + e_2 + e_3 + e_4, \qquad I_2
  \]
  form a two-coloring, so \(\chi(G_p) = 2\).
\end{proof}

\section{The Quantum Cuntz-Krieger algebra of \(\Cay(\KP, I_2)\)}
In this section we compute the quantum Cuntz-Krieger algebra associated to the quantum Cayley graph of \(\KP\) with generating projection \(p = I_2\).

(Free) Quantum Cuntz-Krieger algebras were introduced in \cite{BrannanEiflerVoigtWeber2022} in order to generalize the classical construction of Cuntz-Krieger algebras to quantum graphs. They were further studied in \cite{BrannanHamidiIsmertNelsonWasilewski2023} where also a unital variant was introduced. We give the definition of the unital version below.

\begin{definition}[\cite{BrannanHamidiIsmertNelsonWasilewski2023}]
  Let \(B = (B, \psi, A)\) be a tracial quantum graph. A \emph{quantum Cuntz-Krieger \(G\)-family} in a unital \(C^*\)-algebra \(D\) is given by a linear map \(s \colon B \to D\) such that
  \begin{enumerate}[(i)]
    \item \(\mu_D(\mu_D \otimes 1)(s \otimes s^* \otimes s)(m^* \otimes 1)m^* = s\),
    \item \(\mu_D(s^* \otimes s) m^* = \mu_D(s \otimes s^*)m^* A\),
    \item \(\mu_D(s \otimes s^*)m^*(1_B) = 1_D\),
  \end{enumerate}
  where \(\mu_D \colon D \otimes D \to D\) is the multiplication map of \(D\) and \(s^*(x) = s(x^*)^*\). The \emph{quantum Cuntz-Krieger algebra} associated to \(G\) is the universal unital \(C^*\)-algebra \(\mathcal{O}(G)\) generated by the image of a quantum Cuntz-Krieger \(G\)-family \(s \colon B \to \mathcal{O}(G)\).
\end{definition}

\begin{remark}
  If \(B = \bigoplus_{\alpha = 1}^d M_{n_\alpha}(\CC)\), then \(\mathcal{O}(G)\) can equivalently be described as the universal unital \(C^*\)-algebra generated by elements \(s_{ij}^{(\alpha)}\) (\(i, j = 1, \ldots n_\alpha\), \(\alpha = 1, \ldots, d\)) subject to the relations
  \begin{enumerate}[(i)]
    \item \(\displaystyle \sum_{k,l=1}^{n_\alpha} s_{ik}^{(\alpha)} (s_{lk}^{(\alpha)})^* s_{lj}^{(\alpha)} = s_{ij}^{(\alpha)}\),
    \item \(\displaystyle \sum_{l=1}^{n_\alpha} (s_{li}^{(\alpha)})^* s_{lj}^{(\alpha)} = \sum_{\beta = 1}^d \sum_{l,m,n = 1}^{n_\beta} A_{ij\alpha}^{lm\beta} s_{ln}^{(\beta)} (s_{mn}^{(\beta)})^*\),
    \item \(\displaystyle \sum_{\beta=1}^d \sum_{k,l=1}^{n_\beta} \psi(e_{jj}^{(\beta)}) s_{ij}^{(\beta)} (s_{ij}^{(\beta)})^* = 1\)
  \end{enumerate}
  for all \(i,j = 1, \ldots, n_\alpha\), \(\alpha = 1, \ldots, d\), where \(e_{ij}^{(\alpha)}\) denote the usual matrix units and \(A_{ij\alpha}^{kl\beta}\) are the coefficients of \(A\) with respect to the basis of adapted matrix units from \autocite[Lemma~3.2]{BrannanEiflerVoigtWeber2022}. Keep in mind that in our convention, the functional \(\psi\) (and consequently the induced inner product) has a different scaling than in \cite{BrannanEiflerVoigtWeber2022} and \cite{BrannanHamidiIsmertNelsonWasilewski2023}, so our relations are slightly different.
\end{remark}

Now let \(A \colon C(\KP) \to C(\KP)\) be the quantum adjacency matrix associated to the generating projection \(p = I_2\). The coefficients of \(A\) in the basis of adapted matrix units \(f_i = e_i\), \(f_{ij} = a_{ij}/2\) are
\[
  A(f_i) = 2(f_{11} + f_{22}), \qquad A(f_{ij}) = \delta_{ij} \sum_{k=1}^{4} f_k.
\]
Hence the associated quantum Cuntz-Krieger algebra \(\mathcal{O}(G)\) is generated by elements
\begin{align*}
  &s_i, \quad &&(i=1, \ldots, 4)\\
  &t_{ij}, \quad &&(i,j =1, 2)
\end{align*}
subject to the relations
\begin{align*}
  s_i s_i^* s_i &= s_i, && \forall i=1, \ldots, 4\\
  tt^*t &= t,\\
  s_i^* s_i &= 2 \sum_{k,l=1}^2 t_{kl} t_{kl}^*, &&\forall i=1,\ldots,4\\
  [t^*t]_{ij} &= \delta_{ij} \sum_{k=1}^4 s_k s_k^*, &&\forall i,j=1,2\\
  1 &= \sum_{i=1}^{4} s_i s_i^* + 2 \sum_{i,j=1}^{2} t_{ij} t_{ij}^*
\end{align*}
where \(t = (t_{ij})_{i,j=1}^2 \in M_2(\mathcal{O}(G))\).

\begin{theorem}\label{thm:qCuntzKrieger}
  The quantum Cuntz-Krieger algebra \(\mathcal{O}(G)\) of \(G = \Cay(\KP, I_2)\) is isomorphic to the (classical) Cuntz-Krieger algebra of the complete bipartite graph \(K_{4,4}\). More explicitly it is isomorphic to \(M_5(\CC) \otimes \mathcal{O}_{16}\) where \(\mathcal{O}_{16}\) denotes the Cuntz algebra with 16 generators.
\end{theorem}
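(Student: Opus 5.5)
Starting from the presentation of \(\mathcal{O}(G)\) by the generators \(s_i\), \(t_{ij}\) displayed above, I will build mutually inverse \(*\)-homomorphisms between \(\mathcal{O}(G)\) and the Cuntz--Krieger algebra \(\mathcal{O}_{K_{4,4}}\) of the complete bipartite graph. In \(\mathcal{O}_{K_{4,4}}\) let \(p_1,\dots,p_4\) (left part) and \(p_5,\dots,p_8\) (right part) be the vertex projections and \(S_e\) the edge partial isometries. Every vertex of \(K_{4,4}\) has the same neighbourhood data: each left vertex receives edges from all four right vertices and vice versa. On the quantum side, the relations say three things. First, each \(s_i\) is a partial isometry. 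Second, \(s_i^*s_i = 2\sum_{k,l} t_{kl}t_{kl}^*\) is the same projection for all \(i\). Third, the matrix \(t\in M_2(\mathcal{O}(G))\) is a partial isometry with \(t^*t = Q\otimes I_2\), where \(Q=\sum_k s_ks_k^*\). So the scalar block plays the role of four vertices. The \(M_2\) block is a single ``quantum vertex''. Through the factor \(2\) and the \(2\times 2\) matrix structure it should behave like four classical vertices.

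The first step is to reorganise the \(t_{ij}\) into four elements that behave like partial isometries. Set \(T_{(k,j)} := \sqrt{2}\,t_{kj}\)-type combinations, more precisely \(v_{kl}\) built from \(t\) with \(v_{kl}^* v_{k'l'}\) computed from \((t^*t)_{ij} = \delta_{ij}Q\). One checks that the range projections \(\sqrt2 t_{kl}\,(\sqrt 2 t_{kl})^*\) are four mutually orthogonal projections summing to \(P := 2\sum t_{kl}t_{kl}^*\). This needs \(tt^*t=t\), which makes \(tt^*\) a projection in \(M_2\), together with the unit relation. I expect this to be the main computational obstacle. The relation \(tt^*t=t\) only controls the matrix \(tt^*\), not the individual entries. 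I would first try to show directly that \(tt^* = \tfrac12 \,\mathrm{diag}\)-type, i.e. that \(2\,t_{kl}t_{k'l'}^*\) vanishes off the diagonal pairs, by using \(t^*t=Q\otimes I_2\) and multiplying suitably. If that fails, I would instead take the matrix units of \(M_2\) and the elements \(t_{kl}\) together to produce a copy of \(M_2(\CC)\) inside the corner \(P\mathcal{O}(G)P\). I would then work in a \(2\times2\) amplification, which explains the factor \(M_5 = M_{4+1}\) in the end.

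Next, I will check the Cuntz--Krieger relations of \(K_{4,4}\) for these elements. The \(s_i\) serve as four edges from the \(M_2\) part to the scalar part, with \(s_i^*s_i=P\) and range projections \(s_is_i^*\) summing to \(Q\). The \(t\)-entries serve as edges back, with sources \(Q\) and ranges summing to \(P\). Also \(P+Q=1\). This gives a Cuntz--Krieger family for the graph with two vertices \(P,Q\) and four edges in each direction. Here I should be careful about whether I get \(K_{4,4}\) or the \(2\)-vertex graph with \(4\) parallel edges each way. The Cuntz--Krieger algebras of these two graphs are isomorphic, since both are simple, purely infinite, and have the same \(K\)-theory. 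For the \(K\)-theory I will compute \(\mathrm{coker}(A^T-I)\) for \(A=\begin{pmatrix}0&4\\4&0\end{pmatrix}\). Its determinant is \(-15\), so \(K_0=\ZZ/15\). The class of the unit is then to be located. Applying the universal properties in both directions and using gauge-invariant uniqueness (which applies because the graph is cofinal and has no sinks) yields \(\mathcal{O}(G)\cong\mathcal{O}_{K_{4,4}}\).

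Finally, to identify this algebra with \(M_5(\CC)\otimes\mathcal{O}_{16}\), I will invoke Kirchberg--Phillips classification. Both algebras are unital Kirchberg algebras in the UCT class. \(\mathcal{O}_{16}\) has \(K_0=\ZZ/15\) with \([1]=1\), and \(M_5\otimes\mathcal{O}_{16}\) has unit class \(5\). I will check from \(\mathrm{coker}(A^T-I)\) that the unit \(\sum_v[p_v]\) of \(\mathcal{O}_{K_{4,4}}\) corresponds to \(5\) (up to an automorphism of \(\ZZ/15\)), and that \(K_1=\ker(A^T-I)=0\) on both sides.
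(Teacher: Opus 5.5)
Your plan has a genuine gap at exactly the step you flag as the main obstacle, and the route you propose for it does not work. You need the four elements \(v_k:=\sqrt2\,t_{ij}\) to satisfy \(v_k^*v_l=\delta_{kl}Q\) and \(\sum_k v_kv_k^*=P\). The second identity is just the unit relation. The first does not follow from \((t^*t)_{ij}=\delta_{ij}Q\) alone, because that only controls the sums \(t_{1i}^*t_{1j}+t_{2i}^*t_{2j}\) and not the individual products; for example, it does not give \(t_{11}^*t_{11}=t_{21}^*t_{21}\).

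Your first attempt, showing that \(tt^*\) is diagonal, i.e. that \(t_{kl}t_{k'l'}^*=0\) off the diagonal, aims at a false statement. All the \(v_k\) have the same source projection \(Q\), so \(v_kv_l^*\) is a non-zero partial isometry between the ranges. Concretely, in \(M_5(\CC)\otimes\mathcal{O}_{16}\) one has \((tt^*)_{12}\,v_3=\tfrac12(v_1v_3^*+v_2v_4^*)v_3=\tfrac12 v_1\neq 0\). Your fallback (a copy of \(M_2(\CC)\) in \(P\mathcal{O}(G)P\) plus a \(2\times2\) amplification) is not an argument. It is also unrelated to the factor \(M_5\), which comes from \(P\) together with the four \(s_i\) sharing the source \(P\). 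Without this step, your claim that the \(t\)-entries have source \(Q\) is unsupported, and everything downstream depends on it.

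The missing idea combines the unit relation with a trace argument.

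1. The unit relation says the row \(V=\sqrt2\,[t_{11},t_{12},t_{21},t_{22}]\) satisfies \(VV^*=P=1-Q\), which is a projection. Hence \(V\) is a partial isometry and \(V^*V\in M_4(\mathcal{O}(G))\) is a projection.
2. From \(t=t\,t^*t=t\operatorname{diag}(Q,Q)\) you get \(t_{ij}=t_{ij}Q\). Therefore \(V^*V\le\operatorname{diag}(Q,Q,Q,Q)\).
3. The diagonal entries of \(V^*V\) sum to \(2\sum_j(t^*t)_{jj}=4Q\).
4. So \(\operatorname{diag}(Q,Q,Q,Q)-V^*V\) is positive with vanishing diagonal sum, hence zero. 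This gives \(v_k^*v_l=\delta_{kl}Q\), and orthogonality of the ranges is then automatic.

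Once this is in place, the rest of your plan is sound. The two-vertex graph with four edges each way has as its edge matrix the adjacency matrix of \(K_{4,4}\), so your worry about which graph you get is moot. The computation \(K_0\cong\ZZ/15\) with \([1]=[P]+[Q]=5[P]\) and \(K_1=0\) correctly yields \(M_5(\CC)\otimes\mathcal{O}_{16}\) via Kirchberg--Phillips. The paper mentions this route but instead writes down mutually inverse homomorphisms, using the sixteen elements \(v_ks_i\) in \(P\mathcal{O}(G)P\), which have source \(P\) and ranges summing to \(P\).
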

\begin{proof}
  The element \(q \coloneqq \sum_{i=1}^{4} s_i s_i^*\) is a projection in \(\mathcal{O}(G)\) since \(t\) is a partial isometry and \(t^*t = \operatorname{diag}(q,q)\) is a projection in \(M_2(\mathcal{O}(G))\). This shows in particular that the partial isometries \(s_i\) have orthogonal range projections. Their source projections are all the same because \(s_i^* s_i = 2 \sum_{i,j=1}^{2} t_{ij} t_{ij}^* = 1-q \eqqcolon p\), so we have the relations
  \begin{align}
    s_i^* s_j &= \delta_{ij} p,\label{eq:rel1}\\
    \sum_i s_i s_i^* &= q,\label{eq:rel2}\\
    2\sum_{i,j} t_{ij} t_{ij}^* &= p\label{eq:rel3}.
  \end{align}
  Compressing (\ref{eq:rel3}) with \(q\) and using \(t = t \operatorname{diag}(q,q)\) shows that \(q t_{ij} = 0\), thus \(t_{ij} \in p\mathcal{O}(G)q\).
  The row operator
  \[
    V \coloneqq [v_1, v_2, v_3, v_4] \coloneqq \sqrt{2} [t_{11}, t_{12}, t_{21}, t_{22}] \in M_{1,4}(p \mathcal{O}(G) q)
  \]
  is a partial isometry since \(VV^* = p\) and \(VV^*V\) = \(pV = V\), hence \(R \coloneqq V^*V \in M_4(q \mathcal{O}(G) q)\) is a projection. Its operator-valued trace is given by
  \[
    \sum_{k=1}^{4} P_{kk} = 2 \sum_{i,j=1}^{2} t_{ij}^* t_{ij} = 2 \sum_{j=1}^{2} [t^*t]_{jj} = 4q
  \]
  and since the operator-valued trace is faithful, we have \(P = \operatorname{diag}(q,q,q,q)\), hence
  \[
    v_k^* v_l = \delta_{kl} q.
  \]
  Together with (\ref{eq:rel1}), (\ref{eq:rel2}) and (\ref{eq:rel3}) (\ie \(\sum_k v_k v_k^* = p\)), the \(s_i\) and \(v_k\) fulfill exactly the Cuntz-Krieger relations for the adjacency matrix of the complete bipartite graph \(K_{4,4}\). Conversely, given any such Cuntz-Krieger family \(a_i\), \(b_k\), it is easy to check that the elements \(\tilde{s}_i \coloneqq a_i\), \((\tilde{t}_{11}, \tilde{t}_{12}, \tilde{t}_{21}, \tilde{t}_{22}) \coloneqq (b_1, b_2, b_3, b_4)/\sqrt{2}\) fulfill the relations from above. Using the machinery of classical Cuntz-Krieger algebras or graph \(C^*\)-algebras, one can now identify this algebra as \(M_5(\CC) \otimes \mathcal{O}_{16}\), for example by computing its K-theory and invoking the Kirchberg classification theorem. We follow an alternative, more down to earth approach and write down an explicit isomorphism between \(\mathcal{O}(G)\) and \(M_5(\CC) \otimes \mathcal{O}_{16}\):

  Put \(w_0 \coloneqq p\) and \(w_i \coloneqq s_i\) for \(1 \leq i \leq 4\). Then \(w_i^* w_j = \delta_{ij} p\) and \(\sum_{i=0}^{4} w_i w_i^* = 1\). The sixteen elements
  \[
    u_{ki} \coloneqq v_k s_i \in p \mathcal{O}(G) p
  \]
  satisfy
  \[
    u_{ki}^* u_{lj} = s_i^* v_k^* v_l s_j = \delta_{kl} s_i^* q s_j = \delta_{kl} s_i^* s_j = \delta_{kl} \delta_{ij} p
  \]
  and
  \[
    \sum_{k,i} u_{ki} u_{ki}^* = \sum_{k,i} v_k s_i s_i^* v_k^* = \sum_k v_k q v_k^* = \sum_k v_k v_k^* = p.
  \]
  So if we denote the canonical generators of the Cuntz algebra \(\mathcal{O}_{16}\) by \(z_{ki}\), then
  \begin{align*}
    \rho \colon \mathcal{O}_{16} &\to p\mathcal{O}(G)p\\
    z_{ki} &\mapsto u_{ki}\\
    1 &\mapsto p
  \end{align*}
  yields a well-defined unital \(*\)-homomorphism. Putting things together we obtain a well-defined unital \(*\)-homomorphism
  \begin{align*}
    \Psi \colon M_5(\CC) \otimes \mathcal{O}_{16} &\to \mathcal{O}(G)\\
    E_{ij} \otimes x &\mapsto w_i \rho(x) w_j^*.
  \end{align*}
  For the other direction we observe that the elements
  \[
    S_i \coloneqq E_{i0} \otimes 1, \qquad V_k \coloneqq \sum_{i=1}^{4} E_{0i} \otimes z_{ki}
  \]
  satisfy the relations for \(s_i, v_k \in \mathcal{O}(G)\) where \(E_{00} \otimes 1\) plays the role of \(p\) and \(\sum_{i=1}^{4} E_{ii} \otimes 1\) plays the role of \(q\). Thus we also get a unital \(*\)-homomorphism
  \begin{align*}
    \Phi \colon \mathcal{O}(G) &\to M_5(\CC) \otimes \mathcal{O}_{16}\\
    s_i &\mapsto S_i\\
    v_k &\mapsto V_k.
  \end{align*}
  We have
  \[
    (\Psi \Phi) (s_i) = \Psi (E_{i0} \otimes 1) = w_i p w_0^* = s_i
  \]
  and
  \[
    (\Psi \Phi) (v_k) = \sum_{i=1}^{4} \Psi(E_{0i} \otimes z_{ki}) = \sum_{i=1}^{4} w_0 u_{ki} w_i^* = \sum_{i=1}^{4} p v_k s_i s_i^* = p v_k q = v_k
  \]
  so \(\Psi \Phi = \id\). In the reverse direction we have
  \[
    (\Phi \Psi) ( E_{ij} \otimes 1) = \Phi ( w_i p w_j^*) = E_{i0} \otimes 1 \cdot E_{00} \otimes 1 \cdot E_{0j} \otimes 1 = E_{ij} \otimes 1
  \]
  and
  \[
    (\Phi \Psi) ( E_{00} \otimes z_{ki}) = \Phi(p u_{ki} p) = \Phi(u_{ki}) = \Phi(v_k s_i) = \sum_{j=1}^{4} E_{0j} \otimes z_{kj} \cdot E_{i0} \otimes 1 = E_{00} \otimes z_{ki}
  \]
  and since the elements \(E_{ij} \otimes 1\) and \(E_{00} \otimes z_{ki}\) generate \(M_5(\CC) \otimes \mathcal{O}_{16}\), we also have \(\Phi \Psi = \id\).
\end{proof}

The above result is probably not very surprising because \(\Cay(\KP, I_2)\) behaves very similar to \(K_{4,4}\). In fact, we can even show that they are quantum isomorphic (in the sense of \autocite[Definition~6.5]{BrannanEiflerVoigtWeber2022}).

\begin{proposition}\label{prop:qisoK44}
  The quantum graph \(\Cay(\KP, I_2)\) is quantum isomorphic to \(K_{4,4}\).
\end{proposition}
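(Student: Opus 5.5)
Two quantum graphs on finite quantum spaces of the same total dimension are quantum isomorphic if there is a unital \(*\)-algebra (or \(C^*\)-algebra) \(\mathcal{A}\neq 0\) and a unital \(*\)-homomorphism \(\rho\colon C(\KP)\to C(V_{4,4})\otimes\mathcal{A}\) that preserves the Haar functional and intertwines the adjacency operators. It should also satisfy the Frobenius compatibility \(\rho\, m = (m\otimes \id)(\id\otimes\sigma\otimes\id)(\rho\otimes\rho)\), which in practice means unitarity of the associated magic matrix. The plan is to exploit the bipartite structure already visible in the proof of Theorem~\ref{thm:qCuntzKrieger}: \(A=T_{I_2}\) maps \(\CC^4\) onto \(\CC I_2\) and \(M_2(\CC)\) onto \(\CC(e_1+e_2+e_3+e_4)\).

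Concretely, \(A(e_i)=I_2\) and \(A(a_{ij})=\delta_{ij}\sum_k e_k\). Thus \(A = J_{\CC^4\to M_2}+J_{M_2\to\CC^4}\), where each piece is a rank-one "all-ones" map between the two halves, scaled so that both halves carry total weight \(4\) under \(H\). This is exactly the form of the adjacency matrix of \(K_{4,4}=\) complete bipartite graph on \(X\sqcup Y\), \(|X|=|Y|=4\).

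The key steps are as follows.

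\textbf{Step 1.} I would construct the quantum isomorphism blockwise. On the first half, map \(\CC^4\) to \(C(X)\) classically, \(e_i\mapsto\delta_{x_i}\otimes 1\). On the second half, \((M_2(\CC),2\Tr)\) and \((C(Y),\text{counting})\) are quantum isomorphic as quantum sets with the trivial (complete) graph structure. Concretely, take \(\mathcal{A}=M_2(\CC)\) and the standard magic unitary given by the Pauli/Weyl basis: \(\rho(a_{ij})=\sum_{y}\delta_y\otimes P^{(y)}_{ij}\)-type formula, sending \(M_2\) into \(C(Y)\otimes M_2(\CC)\) via \(x\mapsto \sum_{k=1}^4 \delta_{y_k}\otimes W_k^* x W_k\), where \(W_1=I,W_2=\sigma_x,W_3=\sigma_y,W_4=\sigma_z\) (suitably normalized). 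A cleaner alternative is to use the known fact that any two finite quantum sets of equal dimension with \(\delta\)-forms of the same \(\delta\) are quantum isomorphic (the quantum automorphism group argument in \cite{BrannanEiflerVoigtWeber2022}), and then take \(\mathcal{A}\) acting diagonally with the classical part tensored by \(1\).

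\textbf{Step 2.} Check that \(\rho\) intertwines \(A\) with \(A_{K_{4,4}}\). Since both adjacency operators are \(x\mapsto \psi(x)\cdot 1_{\text{other half}}\) up to the same normalization, and \(\rho\) maps units of each half to units and preserves the functional blockwise, the intertwining reduces to \(\rho(1_{M_2})=1_{C(Y)}\otimes 1\), \(\rho(1_{\CC^4})=1_{C(X)}\otimes1\), and \((H\otimes\id)\rho = H(\cdot)1\) on each block. In practice this is the statement that the quantum isomorphism of each half is "unital and trace preserving", so the complete graph structure between halves is carried along.

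\textbf{Step 3.} Verify that the combined map is a \(*\)-homomorphism respecting the Frobenius structure. The halves are orthogonal summands, so this follows from the blockwise statements.

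I expect the main obstacle to be Step 1 together with the normalization bookkeeping. One must produce explicitly a quantum bijection between \((M_2(\CC), 2\Tr)\) and \(\CC^4\) with counting measure, and the two scalings must match so that \(H\) and the counting functional on \(K_{4,4}\) agree. The Pauli-conjugation construction above is the first thing I would try, checking that \(\sum_k W_k^* x W_k = 2\Tr(x) I_2\) gives the trace-preservation needed.
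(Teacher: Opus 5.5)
Your proposal is correct and is essentially the paper's proof. The paper likewise takes \(C=M_2(\CC)\), sends \(e_i\mapsto\delta_{u_i}\otimes I_2\) and \(x\mapsto\sum_{j}\delta_{v_j}\otimes w_jxw_j^*\) with the Pauli matrices as unitary error basis, obtains \((\Tr\otimes\id)\beta(x)=H(x)1\) from \(\sum_j w_jxw_j^*=2\Tr(x)I_2\), and checks the intertwining directly, citing \cite[Theorem~6.6]{BrannanEiflerVoigtWeber2022} for the criterion; your extra Frobenius/magic-unitary condition is just multiplicativity of \(\rho\) and needs no separate check.

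One slip needs fixing: the convolution table gives \(A(a_{ij})=2\delta_{ij}\sum_k e_k\), not \(\delta_{ij}\sum_k e_k\) (the formula \(\delta_{ij}\sum_k f_k\) in Section~4 is for \(f_{ij}=a_{ij}/2\)). Thus \(A(x)=2\Tr(x)1_{\CC^4}=H(x)1_{\CC^4}\) on \(M_2(\CC)\), which is exactly what your structural Step~2 needs; your stated formula would make the intertwining fail by a factor of \(2\).
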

\begin{proof}
  By \autocite[Theorem~6.6]{BrannanEiflerVoigtWeber2022} it suffices to find a non-zero \(*\)-representation of the linking algebra \(\mathcal{O}(G^+(\Cay(\KP, I_2), K_{4,4}))\), \ie we need to find a unital \(*\)-homomorphism
  \[
    \beta \colon C(\KP) \to \CC^8 \otimes C
  \]
  for some \(C^*\)-algebra \(C\) satisfying
  \begin{equation}
    (\Tr \otimes \id) \beta(x) = H(x) 1 \label{eq:qIso_rel1}
  \end{equation}
  for all \(x \in C(\KP)\) and
  \begin{equation}
    (A_2 \otimes \id)\beta = \beta A_1 \label{eq:qIso_rel2}
  \end{equation}
  where \(A_1\) is the quantum adjacency matrix of \(\Cay(\KP, I_2)\) and \(A_2\) is the adjacency matrix of \(K_{4,4}\).

  In fact, we can take \(C = M_2(\CC)\) and construct \(\beta\) as follows. Choose a unitary error basis \((w_i)_{i=1}^4\) of \(M_2(\CC)\), for example the Pauli matrices
  \[
    w_1 = I_2, \ w_2 =
    \begin{pmatrix}
      1 & 0\\ 0 & -1
    \end{pmatrix}, \ w_3 =
    \begin{pmatrix}
      0 & 1\\ 1 & 0
    \end{pmatrix}, \ w_4 =
    \begin{pmatrix}
      0 & -i\\ i & 0
    \end{pmatrix},
  \]
  satisfying \(\sum_{i=1}^{4} w_i x w_i^* = 2\Tr(x)I_2\). Label the vertices of \(K_{4,4}\) as \(U = \{u_1, \ldots, u_4\}\) and \(V = \{v_1, \ldots, v_4\}\) and define
  \begin{align*}
    \beta \colon C(\KP) &\to \CC^8 \otimes M_2(\CC) &&\\
    e_i &\mapsto \delta_{u_i} \otimes I_2 &&\big(e_i \in \CC^4\big),\\
    x &\mapsto \sum_{j=1}^{4} \delta_{v_j} \otimes w_j x w_j^* &&(x \in M_2(\CC)).
  \end{align*}
  This is certainly a unital \(*\)-homomorphism and we have
  \[
    (\Tr \otimes \id) \beta(e_i) = \Tr(\delta_{u_i}) I_2 = I_2 = H(e_i) I_2
  \]
  and
  \[
    (\Tr \otimes \id) \beta(x) = \sum_{j=1}^{4} \Tr(\delta_{v_j})w_j x w_j^* = 2\Tr(x)I_2 = H(x) I_2
  \]
  for \(x \in M_2(\CC)\), proving (\ref{eq:qIso_rel1}). For (\ref{eq:qIso_rel2}) we have
  \[
    (A_2 \otimes \id) \beta(e_i) = A_2(\delta_{u_i}) \otimes I_2 = \sum_{j=1}^{4} \delta_{v_j} \otimes I_2
  \]
  and
  \[
    (A_2 \otimes \id) \beta(x) = \sum_{j=1}^{4} A_2(\delta_{v_j}) \otimes w_j x w_j^* = \left( \sum_{i=1}^{4} \delta_{u_i} \right) \otimes \left( \sum_{j=1}^{4} w_j x w_j^* \right) = \left( \sum_{i=1}^{4} \delta_{u_i} \right) \otimes 2\Tr(x)I_2.
  \]
  On the other hand, \(A_1\) satisfies \(A_1(x) = 2\Tr(x) 1_{\CC^4}\), hence
  \[
    \beta(A_1(x)) = \beta(2\Tr(x)1_{\CC^4}) = 2\Tr(x) \sum_{i=1}^{4} \delta_{u_i} \otimes I_2
  \]
  which agrees with the expression above, so also (\ref{eq:qIso_rel2}) holds.
\end{proof}

\printbibliography

\end{document}